\newtheorem{theorem}{Theorem}
\newtheorem{lemma}{Lemma}
\newtheorem{proof}{Proof}
\newtheorem{definition}{Definition}
\newtheorem{remark}{Remark}
\newcommand{\de}{\partial}
\newcommand{\di}{{\rm d}}
\newcommand{\vect}[1]{\boldsymbol{#1}}
\newcommand{\tr}{\textcolor{red}} 
\journal{J COMPUT APPL MATH}
\begin{document}

\author[address]{Yang Wang \fnref{label1}}
\fntext[label1]{PhD Candidate, \href{mailto:yang.wang@polimi.it}{yang.wang@polimi.it}.}

\author[address]{Francesco Topputo \corref{cor} \fnref{label2}}
\fntext[label2]{Associate Professor, \href{mailto:francesco.topputo@polimi.it}{francesco.topputo@polimi.it}.}

\cortext[cor]{Corresponding author.}

\address[address]{Department of Aerospace Science and Technology, Politecnico di Milano,\\ Via La Masa 34, Milan, Italy, 20156. }

\begin{frontmatter}


\title{A Homotopy Method Based on Theory of Functional Connections}



%
\begin{abstract}
A method for solving zero-finding problems is developed by tracking homotopy paths, which define connecting channels between an auxiliary problem and the objective problem. Current algorithms' success highly relies on empirical knowledge, due to manually, inherently selected homotopy paths. This work introduces a homotopy method based on the Theory of Functional Connections (TFC). The TFC-based method implicitly defines infinite homotopy paths, from which the most promising ones are selected. A two-layer continuation algorithm is devised, where the first layer tracks the homotopy path by monotonously varying the continuation parameter, while the second layer recovers possible failures resorting to a TFC representation of the homotopy function. Compared to pseudo-arclength methods, the proposed TFC-based method retains the simplicity of direct continuation while allowing a flexible path switching. Numerical simulations illustrate the effectiveness of the presented method.
\end{abstract}

\begin{keyword}
Zero-Finding Problems \sep Homotopy Method \sep Theory of Functional Connections \sep Discrete Continuation \sep Pseudo Arclength


\end{keyword}
\end{frontmatter}



%
\section{Introduction}
\label{S:1}

The homotopy method is an effective technique used to tackle difficult zero-finding problems \cite{Easterling2018,Haberkorn2004,Bulirsch1991,hermant2011optimal,Ji2009}. By traversing a series of auxiliary problems, the homotopy method solves the objective problem by tracking the homotopy path, which is comprised of solutions of former \cite{allgower2003introduction}. There are two steps for designing an effective homotopy method. The first is to construct a homotopy function, while the second is to design an algorithm to track the implicitly defined homotopy path.

For what concerns the construction of the homotopy function, many variations can be found in literature. In \cite{Rahimian2011}, a combination of Newton function and fixed-point function is proposed. In \cite{Dai2003}, all isolated solutions of the cyclic-$n$ polynomial equations are found using polyhedral homotopy method. Newton homotopy method with adjustable auxiliary function is proposed in \cite{wu2006solving}. In \cite{PanLu-1822}, a double-homotopy method is used to construct discontinuous paths \cite{pan2018new}. Homotopy methods from control point of view are investigated in \cite{Ohtsuka1994,kotamraju2000stabilized}. All in all, the state of the art is to define homotopy functions that yield one or few homotopy paths. As the pre-defined homotopy path is not altered during the solution process, the success of the method relies on the empirical knowledge of the objective problem.

For what concerns the tracking strategies, there are two main categories: the piecewise-linear (PL) and the predictor-corrector (PC) continuation methods \cite{allgower2003introduction}. PL methods follow the path by building a piecewise linear approximation of the homotopy line. The search space is subdivided into cells, and the approximation is achieved by finding the solution at faces of cells \cite{haberkorn2004low}. PL methods pose less requirements on the underlying equations, but they are slower and less efficient for high-dimensional problems than PC methods \cite{allgower2003introduction}. The latter track the path through prediction and correction stages. The simplest and most commonly used PC method is the discrete continuation method (DCM) \cite{Haberkorn2004}. 

In DCM, the homotopy parameter varies monotonously at each step. The simplest predictor for DCM is to use the solution of the previous auxiliary problem. A variety of higher-order predictors, such as polynomial extrapolation \cite{allgower2003introduction} and Runge--Kutta methods \cite{bates2011efficient}, have been investigated. In \cite{Brown2016}, the monolithic homotopy method is formulated by integrating the predictor and corrector into a single component. An improvement of this method consists of using higher derivative information \cite{Brown2019}. Although DCM is straightforward and easy to implement, it fails when the homotopy path encounters unfavorable conditions, such as limit points (where the Jacobian matrix is ill-conditioned) or the path goes off to infinity \cite{PanLu-1822}. 

One enhanced PC method is the pseudo-arclength method (PAM) \cite{allgower2003introduction}. By reversing the homotopy path direction and augmenting the Jacobian matrix, PAM can effectively pass limit points \cite{allgower2003introduction}. Compared to DCM, PAM has a broader convergence domain, yet its implementation is more involved \cite{Yamamura1993}. However, PAM may still fail, e.g., when the homotopy path grows indefinitely \cite{wayburn1987homotopy}. This in turn calls for enhancements to improve the algorithmic robustness in homotopy methods.

The Theory of Connections (ToC) has been recently proposed to investigate arbitrary connections between points \cite{mortari2017theory,mortari2017least}. The Theory of Functional Connections (TFC) extends the ToC to the functional domain \cite{mortari2018theory,mortari2019multivariate,leake2019analytically,mai2019theory}. Inspired by the conceptual similarity between homotopy and connections, a TFC-based homotopy method is presented in this paper. TFC-based homotopy implicitly defines infinite homotopy paths that connect the auxiliary problems to the objective problem. This feature paves the way to enhance the algorithm performance by leveraging the freedom in the selection of the homotopy line. A two-layer method that combines DCM and TFC homotopy function is designed. Specifically, DCM is used in the first layer, while the second layer is triggered when continuation fails to advance on the current homotopy path. In the second layer, the TFC-based homotopy function is explored to search a different but feasible homotopy path. Thus, the devised method retains the easy implementation of DCM, while enabling flexible path switching. Several numerical examples are conducted to illustrate the effectiveness of the proposed method.

The paper is structured as follows. Section 2 introduces the fundamentals of homotopy methods. Section 3 outlines the TFC-based, DCM method. In Section 4, several numerical simulations are conducted. Conclusions are drawn in Section 5.



%
\section{Fundamentals of Homotopy Methods}

\subsection{Homotopy Function}
\label{S:2}

Consider the zero-finding problem
\begin{equation}
\label{eq:root_finding_Prob}
    \vect F(\vect x)=\vect 0
\end{equation}
where $\vect x \in \mathbb{R}^n $ and $\vect F : \mathbb{R}^n \to \mathbb{R}^n$ is a $\mathcal{C}^2$ function. Newton's method is widely used to solve problem \eqref{eq:root_finding_Prob}. However, it fails if the initial guess solution lies beyond its convergence domain, or singular points are encountered during iterations. These issues are likely in high-sensitive, nonlinear systems.

Homotopy is an effective strategy to solve difficult zero-finding problems, which lacks a priori knowledge on good initial guesses \cite{allgower2003introduction}. To solve Eq.\ \eqref{eq:root_finding_Prob}, one may define a homotopy or deformation function ${\vect \Gamma }({\kappa,\vect x}): \mathbb{R} \times \mathbb{R}^n \to \mathbb{R}^n $ such that 
\begin{equation}
    \label{eq:HM_conds}
    \vect\Gamma(0,\vect x) = \vect G (\vect x), \quad \quad \vect\Gamma(1,\vect x) = \vect F (\vect x)
\end{equation}
where $\kappa \in [0,1]$ is the homotopy parameter and $\vect G(\vect x): \mathbb{R}^n \to \mathbb{R}^n$ is a user-defined, auxiliary function. Typically, solving $\vect G(\vect x) = \vect 0$ is easier than solving Eq.\ \eqref{eq:root_finding_Prob}. The convex homotopy function is the commonly used form for $\vect\Gamma$:
\begin{equation}
\label{eq:traditional_func}
    \vect\Gamma(\kappa, \vect x) \coloneqq \kappa \ \vect F(\vect x) + (1 - \kappa ) \ \vect G(\vect x)
\end{equation}
Three types of homotopy are commonly used \cite{Rahimian2011}, depending on $\vect G$:
\begin{enumerate}
    \item Newton homotopy, $\vect G(\vect x) \coloneqq \vect F(\vect x)  - \vect F(\vect x_0)$
    \item Fixed-point homotopy, $\vect G(\vect x) \coloneqq \vect x  - \vect x_0$
    \item Affine homotopy, $\vect G(\vect x) \coloneqq A \left(\vect x  - \vect x_0\right)$
\end{enumerate}
where $\vect x_0$ is the solution to $\vect G(\vect x) = \vect 0$ and $A$ is a $n \times n$ matrix. 

Under regularity assumptions \cite{allgower2003introduction,Bhaya2013}, defining the homotopy function inherently generates a unique curve $\vect c(\theta) \coloneqq \vect\Gamma^{-1}(\vect 0): J \to \mathbb{R}^n$ for some open interval $J\subset\mathbb{R}$ starting from the initial solution $\vect x_0$, which contains points satisfying the consistency condition $\vect\Gamma(\kappa, \vect x) = \vect{0}$. The tracked solution curve in $\mathbb{R}^{n+1}$ is called homotopy path or zero curve. With reference to Fig.\ \ref{fig:homotopy_path_type}, the homotopy paths can be mainly classified in five Types \cite{watson2002probability}:

\begin{figure}
    \centering
    \scalebox{0.95}{\includegraphics{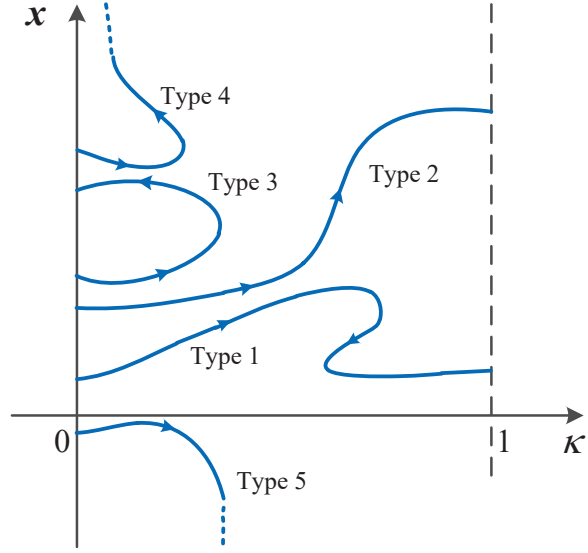}}
    \caption{Different types of homotopy paths $\vect x(\kappa)=\vect\Gamma^{-1}(\vect 0)$ starting from $\vect x(0)$.}
    \label{fig:homotopy_path_type}
\end{figure}

\begin{enumerate}
    \item[1)] The homotopy path ends in $\{1\} \times \mathbb{R}^n$, with non-monotonic $\kappa$;
    \item[2)] The homotopy path ends in $\{1\} \times \mathbb{R}^n$, with monotonic $\kappa$;
    \item[3)] The homotopy path returns to a solution of $\vect\Gamma(0,\vect x)$ in $\{0\} \times \mathbb{R}^n$;
    \item[4)] The homotopy path is unbounded, with non-monotonic $\kappa \in [0,1)$;
    \item[5)] The homotopy path is unbounded, with monotonic $\kappa \in [0,1)$.
\end{enumerate}

Homotopy methods attempt to track the homotopy path starting from $(0,\vect x^*(0))$ to $(1,\vect x^*(1))$. When this happens, one zero of Eq.\ \eqref{eq:root_finding_Prob} is found. The sufficient conditions for the existence of the homotopy path are given by probability-one homotopy theory \cite{watson2002probability,chow1978finding},  based on differential geometry concepts.

\begin{definition}[Transversality]
Let $U \subset \mathbb{R}^{n}$ and $V \subset \mathbb{R}^{p}$ be open sets, and let $\vect \rho$: $[0,1) \times U \times V \to \mathbb{R}^{n}$ be a $\mathcal{C}^2$ map. $\vect\rho$ is said to be transversal to zero if the Jacobian $D \vect \rho \in \mathbb{R}^{{n} \times (1+{n}+{p})} $ has full rank on $\vect\rho^{-1}(\vect 0)$.
\end{definition}

\begin{theorem}[Sard's theorem]
\label{theorem_1}
Let $\vect\rho$: $[0,1) \times U \times V \to \mathbb{R}^{n}$ be a $\mathcal{C}^2$ map. If $\vect\rho$ is transversal to zero, then for almost all $\vect a \in U$, the map
\begin{equation*}
    \vect\rho_{a}(\kappa,{\vect x}) \coloneqq \vect\rho( \kappa,\vect x, \vect a)
\end{equation*}
is also transversal to zero.
\end{theorem}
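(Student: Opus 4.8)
The plan is to recognise the statement as the classical \emph{parametric transversality} (Sard) theorem and to prove it by regarding the zero set of $\vect\rho$ as a smooth manifold fibred over the parameter space $V\subset\mathbb{R}^{p}$, and then pushing ordinary Sard's theorem forward along that fibration. First I would use the transversality hypothesis to upgrade $W\coloneqq\vect\rho^{-1}(\vect 0)\subset[0,1)\times U\times V$ to a manifold: since $D\vect\rho$ has full rank $n$ at every point of $W$, the implicit-function (preimage) theorem shows that $W$ is a $\mathcal{C}^{2}$ submanifold of dimension $(1+n+p)-n=1+p$. There is a minor technical point at the slice $\kappa=0$, where one needs the manifold-with-boundary version of the preimage theorem; this is standard in probability-one homotopy theory and I would not dwell on it.

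Next I would introduce the $\mathcal{C}^{2}$ projection $\pi\colon W\to V$, $\pi(\kappa,\vect x,\vect a)\coloneqq\vect a$, between a manifold of dimension $1+p$ and one of dimension $p$. Since $(1+p)-p+1=2$, classical Sard's theorem applies to $\pi$ --- this is exactly why the hypothesis is $\mathcal{C}^{2}$ rather than merely $\mathcal{C}^{1}$ --- and tells us that the set of critical values of $\pi$ has Lebesgue measure zero; equivalently, almost every $\vect a\in V$ is a regular value of $\pi$.

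The crux is then to show that every regular value $\vect a$ of $\pi$ makes $\vect\rho_{a}$ transversal to zero. Fix such an $\vect a$ and any $(\kappa,\vect x)$ with $\vect\rho_{a}(\kappa,\vect x)=\vect 0$, i.e.\ $(\kappa,\vect x,\vect a)\in W$, and block-partition the rank-$n$ matrix $D\vect\rho(\kappa,\vect x,\vect a)=[\,A\mid B\,]$, where $A=D_{(\kappa,\vect x)}\vect\rho=D\vect\rho_{a}(\kappa,\vect x)\in\mathbb{R}^{n\times(1+n)}$ and $B=D_{\vect a}\vect\rho\in\mathbb{R}^{n\times p}$. Because $T_{(\kappa,\vect x,\vect a)}W=\ker[\,A\mid B\,]$ and $\vect a$ is a regular value of $\pi$, the projection of this kernel onto the $\vect a$-coordinates is all of $\mathbb{R}^{p}$: for every $\vect v\in\mathbb{R}^{p}$ there is $\vect w$ with $A\vect w+B\vect v=\vect 0$. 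Combining this with the surjectivity of $[\,A\mid B\,]$ yields the surjectivity of $A$ alone: given any $\vect y\in\mathbb{R}^{n}$, choose $(\vect w_{0},\vect v_{0})$ with $A\vect w_{0}+B\vect v_{0}=\vect y$, then $\vect w_{1}$ with $A\vect w_{1}=-B\vect v_{0}$, so that $A(\vect w_{0}-\vect w_{1})=\vect y$. Hence $D\vect\rho_{a}(\kappa,\vect x)$ has full rank $n$ at every zero of $\vect\rho_{a}$, i.e.\ $\vect\rho_{a}$ is transversal to zero.

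To close, I would observe that the set of parameters $\vect a$ for which $\vect\rho_{a}$ fails to be transversal to zero is contained in the critical-value set of $\pi$, hence has measure zero --- which is precisely the claimed ``for almost all $\vect a$''. I do not expect any deep obstruction: the entire argument is bookkeeping --- getting the dimension count right so that Sard's theorem is legitimately applicable (the origin of the $\mathcal{C}^{2}$ assumption) and handling the half-open interval $[0,1)$ so that $W$ is a genuine manifold near $\kappa=0$. If anything counts as ``the hard part'', it is simply being careful that the differentiability hypothesis is not wasteful, since dropping to $\mathcal{C}^{1}$ would break the Sard step.
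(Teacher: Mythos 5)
Your argument is correct, but note that the paper offers no proof of this statement to compare against: Theorem~\ref{theorem_1} is quoted as a known result of probability-one homotopy theory, with the proof delegated to the cited references (Chow--Mallet-Paret--Yorke and Watson). What you have written is precisely the standard parametric-transversality argument used there: the regular-value (preimage) theorem upgrades $W=\vect\rho^{-1}(\vect 0)$ to a $\mathcal{C}^2$ manifold of dimension $1+p$, classical Sard applied to the projection $\pi\colon W\to\mathbb{R}^p$ (dimension gap $1$, whence the sharp $\mathcal{C}^2$ requirement) gives that almost every parameter is a regular value, and your block linear algebra correctly converts surjectivity of $[\,A\mid B\,]$ plus surjectivity of $\mathrm d\pi$ into surjectivity of $A=D\vect\rho_a$ at every zero of $\vect\rho_a$, with the vacuous case $\pi^{-1}(\vect a)=\emptyset$ causing no harm. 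Two small points: the parameter in the paper's statement should range over $V\subset\mathbb{R}^p$ rather than $U$ (a slip in the paper, which your proof implicitly corrects by projecting onto the $\vect a$-coordinates in $\mathbb{R}^p$); and the boundary slice $\kappa=0$ should be dispatched explicitly by applying Sard separately to $\pi$ on the interior of $W$ and on $\partial W$, taking the union of the two null sets of critical values --- your ``minor technical point'' remark gestures at this, and since the tangent space at a boundary point is still $\ker[\,A\mid B\,]$, the linear-algebra step goes through unchanged.
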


The parametrized Sard's theorem indicates that for almost all $\vect a \in U$, the zero set of $\vect\rho_a$ consists of smooth, nonintersecting curves~\cite{watson2002probability}. In the following, we take $U \equiv \mathbb{R}^n$ and $V \equiv \mathbb{R}^p$.

\begin{theorem}[Homotopy path]
\label{theorem_2}
Let $\vect\rho: [0,1) \times \mathbb{R}^n \times \mathbb{R}^p \to \mathbb{R}^n$ be a $\mathcal{C}^2$ map, and let $\vect\rho_a(\kappa,\vect x) = \vect\rho(\kappa,\vect x,\vect a)$. Suppose that:
\begin{enumerate}
\item[i)] for each fixed ${\vect a} \in \mathbb{R}^p$, ${\vect\rho}$ is transversal to zero;
\item[ii)] ${\vect\rho}_{a}(0,\vect x) = \vect 0$ has a unique nonsingular solution $\vect x(0)$;
\item[iii)] ${\vect \rho}_{a}(1,\vect x) = \vect F(\vect x)$;
\item[iv)] ${\vect \rho}^{-1}_{a}(\vect 0)$ is bounded;
\end{enumerate}
then, the solution curve reaches a point $(1,\vect x^*(1))$ such that $\vect F(\vect x^*(1)) = \vect 0$. Furthermore, if $D \vect F(\vect x^*(1))$ is invertible, then the homotopy path has finite arc length.
\end{theorem}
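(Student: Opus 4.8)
The plan is to run the classical differential-topology argument that underlies probability-one homotopy theory, converting the four hypotheses into statements about a single smooth curve and then following its two ends. First I would fix an $\vect a\in\mathbb R^p$ for which $\vect\rho_a$ is transversal to zero; by hypothesis~(i) together with Theorem~\ref{theorem_1} almost every $\vect a$ works, and this is the intended reading of the statement. Transversality means $D\vect\rho_a$ has rank $n$ everywhere on $\mathcal M\coloneqq\vect\rho_a^{-1}(\vect 0)$, so by the regular-value (pre-image) theorem $\mathcal M$ is a $\mathcal C^2$ one-dimensional submanifold of $[0,1)\times\mathbb R^n$ with boundary $\de\mathcal M\subseteq\{0\}\times\mathbb R^n$; moreover $\mathcal M$ is relatively closed there, and by~(iii) the defining map extends continuously to $\kappa=1$ with $\vect\rho_a(1,\cdot)=\vect F$.

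Next I would isolate the relevant connected component. By~(ii), $(0,\vect x(0))\in\mathcal M$, it is the unique point of $\mathcal M$ on $\{0\}\times\mathbb R^n$, and since the solution is nonsingular ($D_{\vect x}\vect\rho_a(0,\vect x(0))$ invertible) the implicit function theorem shows exactly one branch of $\mathcal M$ issues from it. Hence the component $\gamma$ of $\mathcal M$ through $(0,\vect x(0))$ — which must be diffeomorphic to $S^1$, a compact interval, or a half-open interval — can only be the last of these: it contains the boundary point $(0,\vect x(0))$, and a second boundary point would again lie on $\{0\}\times\mathbb R^n$, hence coincide with $(0,\vect x(0))$, which is impossible since a compact interval has two distinct endpoints. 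Parametrise $\gamma$ by arc length, $\gamma(s)=(\kappa(s),\vect x(s))$ for $s\in[0,L)$ with $L\le\infty$ and $\gamma(0)=(0,\vect x(0))$.

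The core of the argument, and the step I expect to be the main obstacle, is to show this half-open arc terminates on $\{1\}\times\mathbb R^n$; this is where~(iii) and~(iv) enter. Being a connected component of the closed set $\mathcal M$, $\gamma$ is properly embedded in $[0,1)\times\mathbb R^n$, so $\gamma(s)$ must leave every compact subset of $[0,1)\times\mathbb R^n$ as $s\to L^{-}$. By~(iv) the $\vect x$-component stays in a fixed ball $\overline B_R$, so the only way to escape all such compacta is $\kappa(s)\to1$. Since $\{(\kappa(s),\vect x(s))\}$ then lies in the compact set $[0,1]\times\overline B_R$, it has a cluster point $(1,\vect x^*(1))$, and continuity of the extended map together with~(iii) gives $\vect F(\vect x^*(1))=\vect\rho_a(1,\vect x^*(1))=\vect 0$, which is the first assertion. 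The delicate points here are the properness of the embedding and the continuous extendability to $\kappa=1$; these are precisely the regularity facts built into the cited probability-one references, so one could instead quote \cite{watson2002probability,chow1978finding} verbatim and keep the proof short.

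For the finite-length refinement, suppose $D\vect F(\vect x^*(1))=D_{\vect x}\vect\rho_a(1,\vect x^*(1))$ is invertible. Then $(1,\vect x^*(1))$ is an isolated zero of $\vect F$, while the cluster set of $\gamma(s)$ as $s\to L^{-}$ is connected (a nested intersection of compact connected sets) and contained in $\{1\}\times\mathbb R^n$, hence equals $\{(1,\vect x^*(1))\}$; therefore $\gamma(s)\to(1,\vect x^*(1))$. The implicit function theorem represents $\mathcal M$ near $(1,\vect x^*(1))$ as a $\mathcal C^1$ graph $\vect x=\hat{\vect x}(\kappa)$ over $\kappa\in(1-\eta,1)$, which has finite length. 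Fixing $S$ with $\gamma([S,L))$ inside this graph neighbourhood, $\gamma([S,L))$ has finite length; and $\gamma([0,S])$, being a continuous image of a compact interval, also has finite length. Adding the two shows $L<\infty$, i.e., the homotopy path has finite arc length.
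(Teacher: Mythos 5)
Your argument is essentially correct, but note that the paper itself offers no proof of Theorem~\ref{theorem_2}: it is imported verbatim from probability-one homotopy theory and justified only by the citations \cite{watson2002probability,chow1978finding}. What you have written is, in substance, the standard Chow--Mallet-Paret--Yorke/Watson argument that those references contain: parametrized Sard to get transversality of $\vect\rho_a$ for almost all $\vect a$, the regular-value theorem to make $\vect\rho_a^{-1}(\vect 0)$ a one-dimensional manifold with boundary in $\{0\}\times\mathbb{R}^n$, exclusion of the circle and compact-interval cases for the component through $(0,\vect x(0))$ via uniqueness and nonsingularity of the initial zero, properness plus boundedness (hypothesis iv) to force $\kappa\to 1$, and the implicit function theorem at $(1,\vect x^*(1))$ for the finite-arc-length refinement. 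So the comparison here is not ``same route vs.\ different route'' but ``you supplied the proof the paper delegates to its sources,'' and your reconstruction matches them.

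Two small points to tighten. First, the theorem's domain is $[0,1)\times\mathbb{R}^n\times\mathbb{R}^p$ while hypothesis iii) evaluates $\vect\rho_a$ at $\kappa=1$; you correctly flag that a continuous (indeed $\mathcal{C}^1$) extension to $\kappa=1$ is implicitly assumed, and you need that extension again when you invoke the implicit function theorem at $(1,\vect x^*(1))$ with $D_{\vect x}\vect\rho_a(1,\vect x^*(1))=D\vect F(\vect x^*(1))$ --- state it once and use it in both places. Second, the remark that $\gamma([0,S])$ has finite length ``being a continuous image of a compact interval'' is false as a general principle (continuous images of compact intervals can have infinite length); it is harmless here only because you parametrized $\gamma$ by arc length, so that the length of $\gamma([0,S])$ is exactly $S$, and the real content of the final step is that the tail $\gamma([S,L))$ lies in the finite-length graph $\vect x=\hat{\vect x}(\kappa)$, forcing $L<\infty$. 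Phrase it that way and the finite-length claim is clean.
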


Transversality is hard to verify for arbitrary $\vect a \in \mathbb{R}^{p}$, and a proper $\vect a$ is required to construct the homotopy function. For example, fixed-point homotopy methods require selecting a proper $\vect x_0$. However, in current homotopy methods \cite{allgower2003introduction}, $\vect a$ is manually selected and it cannot vary during iterations. Thus, the success of the entire procedure relies heavily on the initial point chosen, and thus once again on the empirical knowledge of the problem.

\begin{remark}
The homotopy satisfying the hypotheses of Theorem\ \ref{theorem_2} is called a globally convergent probability-one homotopy \cite{watson2002probability}. Designing probability-one homotopy algorithms for general applications is still an open problem. Theorem\ \ref{theorem_2} is a guideline for robust homotopy algorithm design.
\end{remark}
\begin{remark}
	The $\mathcal{C}^2$ class is required for $\vect\rho$, and this condition cannot be relaxed \cite{watson2002probability}.
\end{remark}

\subsection{Path Tracking Methods}

Once the homotopy function is defined, the focus is on tracking its implicitly defined path. Two predictor-corrector methods are reviewed: discrete continuation method (DCM) and pseudo-arclength method (PAM).

\subsubsection{Discrete Continuation Method}

DCM tries to solve  $\vect\Gamma(\kappa, \vect x) = \vect 0$ with monotonous variation of $\kappa$ \cite{haberkorn2004low}. As shown in Fig.\ \ref{fig:DCM}, starting from initial solution at $\kappa = 0$, DCM solves the next solution on homotopy path using the former solution as initial guess. This process continues until the $\kappa = 1$ line is reached. DCM is simple and easy to implement, but it fails when the homotopy path exhibits limit points (Type 1, 3, 4) or goes off to infinity (Type 5). Limit points are points where the Jacobian $\vect\Gamma_{\vect x}({\kappa},\vect x)$ is singular, thus DCM cannot continue by monotonously varying $\kappa$\ \cite{Moore1991}. In Fig.\ \ref{fig:DCM}, the simple zero-order DCM method is shown. In principles, one can construct a higher-order predictor using polynomial extrapolation \cite{allgower2003introduction}. This could result in a more efficient algorithm, yet higher-order DCM will still fail at limit points. Another type of singular points are bifurcation points where homotopy path branches emanate \cite{Moore1991}. Bifurcation points are not considered in this work. 




\begin{figure}
    \centering
    \scalebox{0.9}{\includegraphics{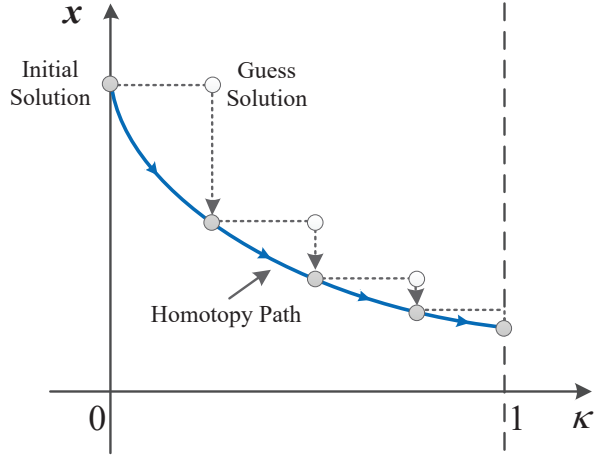}}
    \caption{Graphical interpretation of DCM.}
    \label{fig:DCM}
\end{figure}

\subsubsection{Pseudo-Arclength Method}

PAM is an alternative to pass limit points. Suppose that a solution point $(\kappa_i, \vect x_i)$ satisfies the consistency condition and its unit tangent direction $(\hat{\kappa}_i, \hat{\vect x}_i)$ is known, where the hat is the derivative w.r.t.\ the arclength $s$. In order to find the next solution point $(\kappa_{i+1}, \vect x_{i+1})$, the following augmented system is to be solved for $(\kappa, \vect x)$
\begin{equation}
\label{eqs:agu_eqs}
	\left\{
    \begin{array}{l}
    \vect\Gamma(\kappa, \vect x)  = \vect 0 \\[1mm]
    \left(\vect x - \vect x_i\right)^\top \hat{\vect x}_i + \left(\kappa - \kappa_i \right) \hat{\kappa}_i - \di s = 0
    \end{array}
    \right.
\end{equation}


The orientation of traversing is determined by the augmented Jacobian of system \eqref{eqs:agu_eqs} evaluated at $(\kappa_i, \vect x_i)$, that is,
\begin{equation*}
    \vect J_a (\kappa_i, \vect x_i) =
    \begin{bmatrix}
    \vect\Gamma_x (\kappa_i, \vect x_i) &     \vect\Gamma_\kappa (\kappa_i, \vect x_i) \\[1mm]
    \hat{\vect x}_i^\top & \hat{\kappa}_i
    \end{bmatrix}
\end{equation*}
 
\begin{figure}
    \centering
    \scalebox{0.9}{\includegraphics{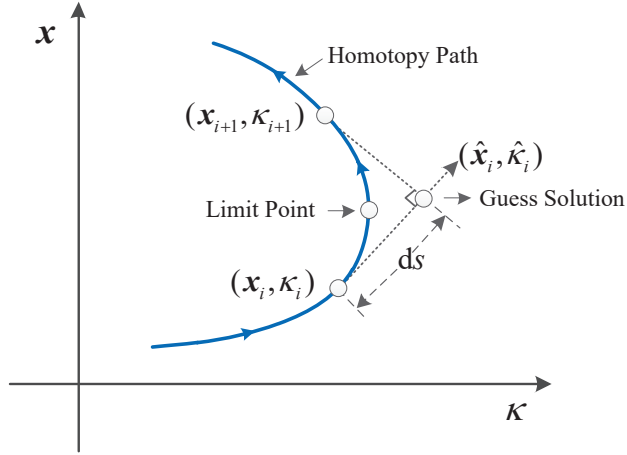}}
    \caption{Graphical interpretation of PAM near a limit point.}
    \label{fig:pseudo_arclength}
\end{figure}

The ability of PAM to pass a limit point is graphically shown in Fig.\ \ref{fig:pseudo_arclength}. When a limit point is approached, PAM attempts to track the homotopy path by predicting the solution along the tangent direction, and refining the solution until system \eqref{eqs:agu_eqs} is solved. Geometrically, the solution curve continues on the opposite $\kappa$ direction (in Fig.\ \ref{fig:pseudo_arclength}, $\kappa$ decreases across the limit point). PAM can elegantly satisfy condition $i)$ in Theorem \ref{theorem_2}, but it still fails when dealing with homotopy path Types 3--5. Compared to DCM, PAM has broader convergence domain, but its implementation is more involved \cite{Yamamura1993}.
%
\section{Theory of Functional Connection Homotopy Method}

\subsection{Theory of Functional Connections}

The Theory of Functional Connections (TFC) is the extension of the Theory of Connections (TOC) \cite{mortari2017theory}. The latter investigates the arbitrary connections between points by constructing a constrained function expressed in terms of an auxiliary function\ \cite{mortari2017theory,mortari2018theory}. It has the property that no matter what the auxiliary function is, the constrained function always satisfies a prescribed set of constraints. 

Suppose we define the scalar function
\begin{equation} \label{eq:TOCscalar2pts}
y(\eta) \coloneqq g(\eta) + \dfrac{\eta - \eta_0}{\eta_f - \eta_0} \left(y_f - g_f\right) + \dfrac{\eta_f - \eta}{\eta_f - \eta_0} \left(y_0 - g_0\right)
\end{equation}
where $y(\eta)$ and $g(\eta)$ are the \textit{constrained function} and \textit{auxiliary function}, respectively, whereas $\eta \in \left[\eta_0, \eta_f \right]$ is the independent variable. It is easy to verify that Eq.\ \eqref{eq:TOCscalar2pts} inherently satisfies $y(\eta_0) = y_0$ and $y(\eta_f) = y_f$ regardless of the specific choice of $g(\eta)$ (note that $g_0=g(\eta_0)$ and $g_f=g(\eta_f)$). Therefore, the line $y(\eta)$ will always connect the points $P_0=(\eta_0, y_0)$ and $P_f=(\eta_f, y_f)$. Eq.\ \eqref{eq:TOCscalar2pts} is the generalization of interpolation formulae: it is not the interpolating expression for a class of functions but for all functions \cite{mortari2017theory}. 


In the multi-dimensional case, the two-point condition is
\begin{equation}
\label{eq:bcs_cons}
        \vect y(\eta_0) = \vect y_0, \qquad
        \vect y(\eta_f) = \vect y_f
\end{equation}
where $\vect y \in \mathbb{R}^n$. The general expression of the constrained function $\vect y(\eta)$ is
\begin{equation}
    \label{eq:expr_y}
    \vect y(\eta) = \vect g(\eta) + P_1(\eta) \vect c_1 + P_2(\eta) \vect c_2
\end{equation}
where $P_{1,2} : \mathbb{R} \to \mathbb{R}^{n \times n}$ are matrices whose elements are scalar-valued functions of $\eta$, while $\vect c_{1,2} \in \mathbb{R}^n$ are constant vectors of weights \cite{mortari2017theory}. Substituting Eq.\ \eqref{eq:bcs_cons} into Eq.\ \eqref{eq:expr_y} and solving for $\vect c_{1,2}$ yields
\begin{equation}
\label{eq:expr_c}
    \begin{pmatrix}
    \vect c_1\\
    \vect c_2
    \end{pmatrix}
    =
    \begin{bmatrix}
    P_1(\eta_0) & P_2(\eta_0) \\
    P_1(\eta_f) & P_2(\eta_f)
    \end{bmatrix}^{-1}
    \begin{pmatrix}
    \vect y_0 - \vect g_0\\
    \vect y_f - \vect g_f
    \end{pmatrix}
    =
    \begin{bmatrix}
    Q_{11} & Q_{12}\\
    Q_{21} & Q_{22}
    \end{bmatrix}
    \begin{pmatrix}
    \vect y_0 - \vect g_0\\
    \vect y_f - \vect g_f
    \end{pmatrix}
\end{equation}
where again $\vect g_0 = \vect g(\eta_0)$ and $\vect g_f = \vect g(\eta_f)$. Moreover
\begin{equation}
\label{eq:expr_q}
    \begin{array}{rcl}
    Q_{11} &=& \left[P_{1}(\eta_0) - P_{2}(\eta_0) P_{2}^{-1}(\eta_f) P_{1}(\eta_f)\right]^{-1}\\[1mm]
    Q_{21} &=& - P_{2}^{-1}(\eta_f) P_{1}(\eta_f) \ Q_{11}\\[1mm]
    Q_{12} &=& - P_{1}^{-1}(\eta_0) P_{2}(\eta_0) \ Q_{22}\\[1mm]
    Q_{22} &=& \left[P_{2}(\eta_f) - P_{1}(\eta_f) P_{1}^{-1}(\eta_0) P_{2}(\eta_0)\right]^{-1}
    \end{array}
\end{equation}
The selection of $P_{1,2}(\eta)$ in Eq.\ \eqref{eq:expr_y} must ensure the existence of $Q_{ij}$ in Eq.\ \eqref{eq:expr_q}. Substituting Eqs.\ \eqref{eq:expr_c} and \eqref{eq:expr_q} into Eq.\ \eqref{eq:expr_y} gives the general form of constrained function
\begin{equation}
    \label{eq:constrained_func_y}
    \vect y(\eta) = \vect g(\eta) + \sum_{i=1}^{2} P_i(\eta) Q_{i1} (\vect y_0 - \vect g_0) + \sum_{i=1}^{2} P_i(\eta) Q_{i2} (\vect y_f - \vect g_f)
\end{equation}

The constrained function $\vect y(\eta)$ in Eq.\ \eqref{eq:constrained_func_y} defines arbitrary connection paths between $\vect y_0$ and $\vect y_f$ produced by the infinitely possible choices of $\vect g(\eta)$. The constrained function for arbitrary boundary conditions can also be established~\cite{mortari2017theory}. The Theory of Functional Connections (TFC) extends the idea above to construct the constrained function on a functional domain \cite{mortari2018theory}. 


\subsection{TFC-Based Homotopy Function}

From a geometrical point of view, the homotopy function defines the solution curve connecting the two zero-finding problems defined at the boundaries of $\kappa$, which satisfy Eq.\ \eqref{eq:HM_conds}. Analogously, the constrained function in the TFC connects points at the boundaries of $\eta$. Interpreting the constrained function as describing an homotopy path is therefore natural.

In Eq.\ \eqref{eq:constrained_func_y}, replacing the constrained function $\vect y(\eta)$ by the homotopy function $\vect \Gamma(\eta,\vect x)$, and $\vect y_0$, $\vect y_f$ by $\vect G(\vect x)$, $\vect F(\vect x)$, respectively, we have
\begin{equation}
    \label{eq:constrained_func_gamma}
    \vect\Gamma(\eta,\vect x) = \vect g(\eta) + \sum_{i=1}^{2} P_i(\eta) Q_{i1} (\vect G(\vect x)-\vect g_0) + \sum_{i=1}^{2} P_i(\eta) Q_{i2} (\vect F(\vect x)-\vect g_f)
\end{equation}
The auxiliary function $\vect g(\eta)$ can be expressed as a linear combination of basis functions with corresponding weights, that is
\begin{equation}
    \label{eq:expr_auxiliary_func}
    \vect g(\eta) = \Omega \vect h(\eta)
\end{equation}
where $\vect h(\eta): \mathbb{R} \to \mathbb{R}^{m}$ is the vector of basis functions, whereas $\Omega \in \mathbb{R}^{n \times m}$ is the matrix of weights. Note that $\vect g_0 = \Omega \vect h_0$ and $\vect g_f = \Omega \vect h_f$, where $\vect h_0 = \vect h(\eta_0)$ and $\vect h_f = \vect h(\eta_f)$. A linear map between $\kappa \in \left[0,1\right]$ and $\eta \in [\eta_0, \eta_f]$ is also used:
\begin{equation}
\label{eq:ke}
\eta(\kappa) = (1-\kappa) \, \eta_0  + \kappa \, \eta_f
\end{equation}

Substituting Eqs.\ \eqref{eq:expr_auxiliary_func} and \eqref{eq:ke} into Eq.\ \eqref{eq:constrained_func_gamma} yields
\begin{equation}
    \label{eq:constrained_func_f_2}
        \vect\Gamma(\kappa,\vect x,\Omega) = \Omega \vect h(\kappa) + \sum_{i=1}^{2} P_i(\kappa) Q_{i1} \left(\vect G(\vect x)-\Omega \vect h_0\right) + \sum_{i=1}^{2} P_i(\kappa) Q_{i2} \left(\vect F(\vect x)-\Omega \vect h_f\right)
\end{equation}
Notice that $\vect\Gamma$ in Eq.\ \eqref{eq:constrained_func_f_2}, beside the natural dependence on $\kappa$ and $\vect x$, is also a function of the free parameter $\Omega$, which can be varied to steer the solution curve from $\vect G^{-1}(\vect 0)$ to $\vect F^{-1}(\vect 0)$. 

It is convenient to isolate in Eq.\ \eqref{eq:constrained_func_f_2} the part depending on $\kappa$ and $\vect x$ only
\begin{equation}
    \label{eq:constrained_func_f_3}
    \vect\Gamma(\kappa, \vect x, \Omega) =\Omega  \left(\vect h(\kappa) - \sum_{i=1}^{2} P_i(\kappa) Q_{i1} \vect h_{0} - \sum_{i=1}^{2} P_i(\kappa) Q_{i2} \vect h_{f}\right) + \vect \Gamma_0(\kappa,\vect x)
\end{equation}
where
\begin{equation} \label{eq:Gamma_0}
\vect \Gamma_0(\kappa,\vect x) \coloneqq \sum_{i=1}^{2} P_i(\kappa) Q_{i1} \vect G(\vect x) + \sum_{i=1}^{2} P_i(\kappa) Q_{i2} \vect F(\vect x)
\end{equation}

By taking the partial derivative of Eq.\ \eqref{eq:constrained_func_f_3} w.r.t.\ $\vect x$, we find that
\begin{equation*}
	\dfrac{\de \vect\Gamma(\kappa,\vect x,\Omega)}{\de \vect x} = \dfrac{\de \vect \Gamma_0(\kappa,\vect x)}{\de \vect x}
\end{equation*}
indicating that when a limit point is encountered both Jacobian matrices are singular, regardless of the selection of $\Omega$. In order to regularize $\de\vect\Gamma/\de\vect x$ by varying $\Omega$, we let the basis functions $\vect h$ to depend on the present solution $\vect x$ as well; that is, $\vect h = \vect h(\kappa, \vect x)$. Thus, Eq.\ \eqref{eq:constrained_func_f_3} becomes
\begin{equation}
    \label{eq:constrained_func_f_final}
\vect\Gamma(\kappa,\vect x,\Omega) = \Omega \vect\Gamma_{\Omega}(\kappa,\vect x) + \vect\Gamma_0(\kappa,\vect x)
\end{equation}
where
\begin{equation}
    \label{eq:constrained_func_f_final_aux}
    \vect\Gamma_{\Omega}(\kappa,\vect x) \coloneqq \vect h(\kappa,\vect x) - \sum_{i=1}^{2} P_i(\kappa) Q_{i1} \vect h_{0}(\vect x) - \sum_{i=1}^{2} P_i(\kappa) Q_{i2} \vect h_{f}(\vect x)
\end{equation}
Inspired by Eq.\ \eqref{eq:constrained_func_f_final}, the formal definition of TFC-based homotopy is given.

\begin{definition}[TFC-based homotopy function]
\label{def:tfc}
Let $\hat{\vect \rho}(\kappa,\vect x,\vect\varepsilon,\vect a)$ : $ [0,1) \times \mathbb{R}^n \times \mathbb{R}^q \times \mathbb{R}^p \to \mathbb{R}^n$ be a $\mathcal{C}^2$ map, and let $\hat{\vect \rho}_a(\kappa,\vect x,\vect\varepsilon) = \hat{\vect \rho}(\kappa,\vect x,\vect\varepsilon,\vect a)$ for fixed $\vect a$. $\hat{\vect \rho}_a(\kappa,\vect x,\vect\varepsilon)$ is called TFC-based homotopy function if 
\begin{enumerate}
	\item[i)] it automatically satisfies the boundary conditions 
	\begin{equation*}
	\hat{\vect \rho}_a(0,\vect x,\vect\varepsilon) = \vect G (\vect x) \qquad \textrm{and} \qquad \hat{\vect \rho}_a(1,\vect x,\vect\varepsilon) = \vect F (\vect x)
	\end{equation*}
	for arbitrary $\vect\varepsilon$;
	\item[ii)] $\forall\kappa \in (0,1)$ and $\forall\vect x \in \mathbb{R}^n$, $\exists\ \vect \epsilon$ such that $\de \hat{\vect \rho}_a(\kappa,\vect x,\vect\varepsilon)/\de \vect x$ is regular.
\end{enumerate}
\end{definition}

In traditional homotopy methods (e.g., Newton homotopy), the term $\vect a$ in the homotopy function $\vect \rho_a (\kappa, \vect x)$ in Theorem \ref{theorem_1} is set at the beginning of the continuation procedure (e.g., by providing the solution $\vect x(0)$ to the initial problem $\vect G(\vect x) = \vect 0$) and so is the homotopy path. The TFC-based homotopy function $\hat{\vect \rho}_a(\kappa,\vect x,\vect\varepsilon)$ is the generalization of $\vect \rho_a (\kappa, \vect x)$. Here, although $\vect a$ is fixed, $\vect \epsilon$ brings in flexibility in the homotopy path while not affecting the boundary conditions, Eq.\ \eqref{eq:HM_conds}. The TFC-based homotopy function implicitly defines infinite homotopy paths because of the infinite possible selections of $\vect \epsilon$. Moreover, condition ii) in Definition \ref{def:tfc} enables regularizing the path by varying $\vect \epsilon$. Therefore, it is a tool to recover improperly defined paths, by detecting them and switching to different, yet feasible, homotopy paths.

Equation \eqref{eq:constrained_func_f_final} provides a general form of TFC-based homotopy function. Here, $\vect \Gamma_0(\kappa,\vect x)$ is equivalent to $\vect \rho_a (\kappa, \vect x)$ and $\Omega$ can be seen as $\vect \epsilon$ (see Appendix). Let $\tau = e^{\eta_0 - \eta_f}$, the following three examples are given based on different choice of $P_{1,2}(\eta)$
\begin{enumerate}
	\item For $P_1 = I$ and $P_2 = {\eta} I$
	\begin{equation}
	\label{eq:ToCHM_case_1}
	\vect\Gamma (\kappa, \vect x,\Omega) = \Omega\left(\vect h(\kappa,\vect x) +(\kappa  - 1)\vect h_{0}(\vect x) -\kappa \vect h_{f}(\vect x) \right) + \vect\Gamma_0 (\kappa, \vect x)
	\end{equation}
	\item For $P_1 = I$ and $P_2 = e^\eta I$
	\begin{equation}
	\begin{aligned}
	\label{eq:ToCHM_case_2}
	\vect\Gamma (\kappa, \vect x,\Omega) = \Omega \left(\vect h(\kappa, \vect x) - \dfrac{1-\tau^{(1-\kappa)}}{1-\tau} \vect h_{0}(\vect x) -\dfrac{-\tau + \tau^{(1-\kappa)}}{1-\tau} \vect h_{f}(\vect x)  \right) + \vect \Gamma_0 (\kappa , \vect x)
	\end{aligned}
	\end{equation}
	\item For $P_1 = I$ and $P_2 = e^{-\eta} I$
	\begin{equation}
	\label{eq:ToCHM_case_3}
	\vect\Gamma (\kappa, \vect x,\Omega) = \Omega\left( \vect h(\kappa,\vect x) - \dfrac{\tau-\tau^\kappa}{\tau-1} {\vect h_{0}}(\vect x) -\dfrac{-1 + \tau^\kappa}{\tau-1}  \vect h_{f}(\vect x) \right) + \vect \Gamma_0 ( \kappa, \vect x)
	\end{equation}
\end{enumerate}


\subsection{Regularization}

This section shows the sufficient conditions for point ii) in Definition \ref{def:tfc}. 
\begin{lemma}
	\label{lemma_1}
	Suppose that a matrix $A \in \mathbb{R}^{n \times n}$ is the product of two matrices $B \in \mathbb{R}^{n \times m}$ and $C \in \mathbb{R}^{m \times n}$; $A = BC$. If $m < n$, then $A$ is singular.
\end{lemma}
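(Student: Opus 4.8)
The plan is to prove singularity of $A$ by producing a nonzero vector in its kernel, which I obtain from a dimension count on $C$. The only ingredient is the rank--nullity theorem together with the elementary fact that an $m \times n$ matrix has rank at most $m$ (it has only $m$ rows). Viewing $C$ as a linear map $\mathbb{R}^{n} \to \mathbb{R}^{m}$, I have $\operatorname{rank}(C) \le m$, so
\begin{equation*}
\dim \ker C \;=\; n - \operatorname{rank}(C) \;\ge\; n - m \;\ge\; 1,
\end{equation*}
the last inequality being exactly the hypothesis $m < n$. Hence $\ker C \neq \{\vect 0\}$: there is some $\vect v \in \mathbb{R}^{n}$ with $\vect v \neq \vect 0$ and $C \vect v = \vect 0$.

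Next I propagate $\vect v$ through $B$. Since $A = B C$,
\begin{equation*}
A \vect v \;=\; B(C \vect v) \;=\; B \vect 0 \;=\; \vect 0,
\end{equation*}
so $\vect v$ is a nonzero element of $\ker A$. A square real matrix with a nontrivial kernel is not invertible, i.e.\ $A$ is singular, which is the assertion. Equivalently, one may conclude without mentioning kernels: $\operatorname{rank}(A) = \operatorname{rank}(BC) \le \operatorname{rank}(C) \le m < n$, a strict rank deficiency for an $n \times n$ matrix, which is the definition of singular.

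I do not expect a real obstacle here: the statement is a textbook rank inequality, and the only point worth stating carefully is that ``singular'', ``non-invertible'' and ``having a nontrivial kernel'' denote the same property for a square matrix over $\mathbb{R}$. The relevance is that in Eq.\ \eqref{eq:constrained_func_f_final} the $\vect x$-derivative of $\Omega\,\vect\Gamma_{\Omega}(\kappa,\vect x)$ is the product of $\Omega \in \mathbb{R}^{n \times m}$ with the $m \times n$ Jacobian $\de\vect\Gamma_{\Omega}/\de\vect x$; the lemma then shows that if $m < n$ this contribution is singular for every choice of $\Omega$, so at least $m \ge n$ basis functions are needed for the regularization required by point ii) of Definition \ref{def:tfc}.
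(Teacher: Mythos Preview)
Your proof is correct and follows essentially the same route as the paper: both produce a nonzero vector in $\ker C$ (the paper via the ``more unknowns than equations'' observation, you via rank--nullity) and then push it through $B$ to land in $\ker A$. Your added remark on the rank inequality $\operatorname{rank}(BC)\le m<n$ and the contextual comment on why $m\ge n$ is needed for the regularization in Definition~\ref{def:tfc} are accurate bonuses.
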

\begin{proof}
Consider the linear equation
\begin{equation*}
C\vect x = \vect 0
\end{equation*}
if $m < n$, the number of equations is less than that of unknowns, thus there exists nonzero solution $\Tilde{\vect x}$ such that 
\begin{equation*}
C \Tilde{\vect x} = \vect 0
\end{equation*}
then
\begin{equation*}
BC \Tilde{\vect x} = A \Tilde{\vect x} = \vect 0
\end{equation*}
indicating that the matrix $A$ is singular. \qed
\end{proof}
\begin{lemma}
\label{lemma_2}
If $A \in \mathbb{R}^{m \times n}$ is full row rank and $m \leq n$, then $B=AA^\top \in \mathbb{R}^{m \times m}$ is regular. 
\end{lemma}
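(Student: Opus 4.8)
The plan is to show that $B = AA^\top$ is invertible by proving it is positive definite, which for a symmetric matrix is equivalent to being regular. First I would take an arbitrary vector $\vect v \in \mathbb{R}^m$ and compute the quadratic form $\vect v^\top B \vect v = \vect v^\top A A^\top \vect v = (A^\top \vect v)^\top (A^\top \vect v) = \| A^\top \vect v \|^2 \geq 0$, which already establishes that $B$ is positive semidefinite. The crux is then to rule out the possibility that this quantity vanishes for a nonzero $\vect v$.

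The key step is the following: suppose $\vect v^\top B \vect v = 0$ for some $\vect v \in \mathbb{R}^m$. Then $\| A^\top \vect v \|^2 = 0$, hence $A^\top \vect v = \vect 0$. Since $A$ has full row rank $m$ (and $m \leq n$), the rows of $A$ are linearly independent, so the columns of $A^\top$ are linearly independent; equivalently, $A^\top : \mathbb{R}^m \to \mathbb{R}^n$ has trivial kernel. Therefore $A^\top \vect v = \vect 0$ forces $\vect v = \vect 0$. This shows $\vect v^\top B \vect v > 0$ for all nonzero $\vect v$, i.e., $B$ is symmetric positive definite, and hence nonsingular. Alternatively, one could invoke the rank identity $\operatorname{rank}(A A^\top) = \operatorname{rank}(A) = m$, so that the $m \times m$ matrix $B$ has full rank and is regular; I would mention this as a shortcut but prefer the quadratic-form argument as it is self-contained.

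I do not expect any genuine obstacle here — the statement is standard linear algebra. The only point requiring a small amount of care is the justification that full row rank of $A$ implies the injectivity of $A^\top$ (so that $A^\top \vect v = \vect 0 \Rightarrow \vect v = \vect 0$); this follows because $\operatorname{rank}(A^\top) = \operatorname{rank}(A) = m$ and $A^\top$ has $m$ columns, so its columns are independent and its null space is $\{\vect 0\}$. With that in hand the proof closes in a few lines. Note the hypothesis $m \leq n$ is what makes full row rank possible; it is used implicitly when we say $\operatorname{rank}(A) = m$.
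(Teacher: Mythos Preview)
Your proof is correct and takes essentially the same approach as the paper: both arguments reduce to the quadratic form $\vect v^\top B \vect v = \|A^\top \vect v\|^2$ and then use the full row rank of $A$ to conclude that $A^\top \vect v = \vect 0$ forces $\vect v = \vect 0$. The only cosmetic difference is that the paper phrases this as showing $\ker B = \{\vect 0\}$ (starting from $B\vect x = \vect 0$ and left-multiplying by $\vect x^\top$), whereas you phrase it as positive definiteness of $B$; the core step is identical.
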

\begin{proof}
	Consider the linear equation
	\begin{equation*}
		B \vect x = AA^\top \vect x = \vect 0
	\end{equation*}
	which equals to
	\begin{equation*}
		\vect x^\top AA^\top \vect x = \left(A^\top \vect x\right)^\top A^\top \vect x = \vect 0\ \rightarrow \ A^\top \vect x = \vect 0
	\end{equation*}
Since $m \leq n$ and $A$ is full row rank, thus $\vect x = \vect 0$. Therefore, $B$ is regular. \qed
\end{proof}

\begin{theorem}
	\label{theorem_3}
	Let $\vect\Gamma(\kappa,\vect x,\Omega) = \Omega \vect\Gamma_{\Omega}(\kappa,\vect x) + \vect\Gamma_0(\kappa,\vect x)$ be the TFC-based homotopy function, and let $m=n$ . If ${\de \vect\Gamma_{\Omega} (\kappa, \vect x)}/{\de \vect x} \in \mathbb{R}^{m \times n}$ is regular, then $\exists\ \Omega \in \mathbb{R}^{n \times m}$ such that ${\de \vect\Gamma (\kappa , \vect x,\Omega)}/{\de \vect x}$ is regular.
\end{theorem}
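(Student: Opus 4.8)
The plan is to reduce the claim to an elementary fact about affine maps on the space of square matrices. First I would compute the Jacobian of $\vect\Gamma$ with respect to $\vect x$ from the decomposition $\vect\Gamma(\kappa,\vect x,\Omega) = \Omega\,\vect\Gamma_{\Omega}(\kappa,\vect x) + \vect\Gamma_0(\kappa,\vect x)$. Since the weight matrix $\Omega$ carries no $\vect x$-dependence --- the whole $\vect x$-dependence having been pushed into $\vect h$, hence into $\vect\Gamma_\Omega$ --- differentiation gives
\begin{equation*}
\dfrac{\de\vect\Gamma(\kappa,\vect x,\Omega)}{\de\vect x} \;=\; \Omega\,\dfrac{\de\vect\Gamma_{\Omega}(\kappa,\vect x)}{\de\vect x} + \dfrac{\de\vect\Gamma_0(\kappa,\vect x)}{\de\vect x},
\end{equation*}
an $n\times n$ matrix. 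Write $J_\Omega \coloneqq \de\vect\Gamma_{\Omega}/\de\vect x$ and $J_0 \coloneqq \de\vect\Gamma_0/\de\vect x$. With $m=n$, the hypothesis that $J_\Omega\in\mathbb{R}^{m\times n}$ is regular says precisely that $J_\Omega$ is an invertible square matrix; I would note in passing that, by Lemma \ref{lemma_1}, a choice $m<n$ would force $\Omega J_\Omega$ to be singular for every $\Omega$, so $m\geq n$ is necessary and $m=n$ is the minimal case (for $m>n$ the argument below still works, with $J_\Omega^{-1}$ replaced by the left inverse $(J_\Omega^{\top}J_\Omega)^{-1}J_\Omega^{\top}$, whose existence follows from Lemma \ref{lemma_2}).

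Next I would simply exhibit an admissible $\Omega$. Taking
\begin{equation*}
\Omega^{\star} \coloneqq \bigl(I_n - J_0\bigr)\,J_\Omega^{-1},
\end{equation*}
substitution yields $\de\vect\Gamma(\kappa,\vect x,\Omega^{\star})/\de\vect x = (I_n - J_0)J_\Omega^{-1}J_\Omega + J_0 = I_n$, which is plainly regular, establishing existence. If one wants the stronger statement that \emph{almost every} $\Omega$ works, I would observe that $\Omega \mapsto \Omega J_\Omega + J_0$ is an affine bijection of $\mathbb{R}^{n\times n}$ onto itself (its linear part is right multiplication by the invertible matrix $J_\Omega$); hence $\de\vect\Gamma/\de\vect x$ is singular only for $\Omega$ in the set $\{(\Xi - J_0)J_\Omega^{-1}:\det\Xi = 0\}$, the image of a measure-zero set under a diffeomorphism and therefore of measure zero.

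I do not anticipate a genuine obstacle: the content is linear algebra once the Jacobian is written down. The two points that need care are (i) the matrix-calculus step --- recognizing that $\de(\Omega\vect\Gamma_\Omega)/\de\vect x = \Omega\,J_\Omega$ precisely because $\Omega$ is the free parameter and is constant in $\vect x$, which is the entire reason $\vect h$ was allowed to depend on $\vect x$; and (ii) the dimension bookkeeping --- $m=n$ is what turns ``regular'' into ``invertible'' and makes $J_\Omega^{-1}$, hence $\Omega^{\star}$, well defined. The theorem then certifies property ii) of Definition \ref{def:tfc}: whenever the auxiliary Jacobian $\de\vect\Gamma_\Omega/\de\vect x$ is regular, there exists a weight matrix $\Omega$ (equivalently, a parameter $\vect\varepsilon$) making $\de\hat{\vect\rho}_a/\de\vect x$ regular, so limit points can be bypassed by switching to a feasible homotopy path.
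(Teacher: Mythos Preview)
Your proof is correct and is in fact cleaner than the paper's. Both arguments start from the same Jacobian decomposition
\[
\dfrac{\de\vect\Gamma}{\de\vect x}=\Omega J_\Omega+J_0,
\]
and both proceed by choosing a target regular matrix for the left-hand side and then solving for $\Omega$. The difference is in how the target is manufactured. The paper performs a singular value decomposition of $J_0=U^\top\mathrm{diag}(\Sigma_1,\Sigma_2)V$, builds an auxiliary regular matrix $S=U^\top\mathrm{diag}(\Lambda_1,\Lambda_2)V$ with the same singular vectors so that $S+J_0$ is regular, and then invokes Lemma~\ref{lemma_1} (to justify $m\geq n$) and Lemma~\ref{lemma_2} (to ensure $J_\Omega J_\Omega^\top$ is invertible) in order to solve $\Omega J_\Omega=S$ via the Moore--Penrose formula. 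You bypass all of this by picking the simplest possible target, $I_n$, and using the plain inverse $J_\Omega^{-1}$, which exists immediately since $m=n$ and $J_\Omega$ is regular. Your choice $\Omega^\star=(I_n-J_0)J_\Omega^{-1}$ gives the result in one line. The paper's SVD detour buys nothing extra for the existence statement; your additional observation that $\Omega\mapsto\Omega J_\Omega+J_0$ is an affine bijection, hence regularity holds for almost every $\Omega$, is a genuine strengthening not present in the paper.
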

\begin{proof}
	Taking the derivative of Eq.\ \eqref{eq:constrained_func_f_final} w.r.t.\ $\vect x$ yields
	\begin{equation*}
	\label{eq:derivative_ToC}
	\dfrac{\de \vect\Gamma (\kappa , \vect x,\Omega)}{\de \vect x} 
	=\Omega \dfrac{\de \vect\Gamma_{\Omega} (\kappa, \vect x)}{\de \vect x} + \dfrac{\de \vect\Gamma_0 (\kappa, \vect x)}{\de \vect x}
	\end{equation*}
	Applying singular value decomposition to ${\de \vect\Gamma_{0} (\kappa, \vect x)}/{\de \vect x}$, there exists
	\begin{equation*}
	\dfrac{\de \vect\Gamma_{0} (\kappa, \vect x)}{\de \vect x} = U^\top \left[
	\begin{matrix}
    \Sigma_1 & \\
	 & \Sigma_2
	\end{matrix}\right] V
	\end{equation*}
	where $\Sigma_1$ are nonzero singular values, and $\Sigma_2$ are zero singular values if ${\de \vect\Gamma_{0} (\kappa, \vect x)}/{\de \vect x}$ is singular. $U$ and $V$ are corresponding singular vectors. We can construct a regular matrix $S \in \mathbb{R}^{n \times n}$ as
	\begin{equation*}
	S = U^\top \left[
	\begin{matrix}
	\Lambda_1 &  \\
	 & \Lambda_2
	\end{matrix}\right] V
	\end{equation*}
	where $\Lambda_1$ and $\Lambda_2$ are non-zero singular values. There always exists $ \Lambda_1$ and $\Lambda_2$ such that the matrix
	\begin{equation*}
	\dfrac{\de \vect\Gamma (\kappa , \vect x,\Omega)}{\de \vect x} = S + \dfrac{\de \vect\Gamma_{0} (\kappa, \vect x)}{\de \vect x} = U^\top \left[
	\begin{matrix}
	\Lambda_1 + \Sigma_1 &  \\
	 & \Lambda_2 + \Sigma_2
	\end{matrix}\right] V \in \mathbb{R}^{n \times n}
	\end{equation*}
	is regular. Let $S \coloneqq \Omega\ {\de \vect\Gamma_{\Omega} (\kappa, \vect x)}/{\de \vect x}$. From Lemma \ref{lemma_1}, this requires $m \geq n$. Since ${\de \vect\Gamma_{\Omega} (\kappa, \vect x)}/{\de \vect x}$ is full rank and $m = n$, from Lemma \ref{lemma_2}, ${\exists}\ \Omega$ such that
	\begin{equation*}
	\Omega = S \left(\dfrac{\de \vect\Gamma_{\Omega} (\kappa, \vect x)}{\de \vect x}\right)^\top \left[ \left(\dfrac{\de \vect\Gamma_{\Omega} (\kappa, \vect x)}{\de \vect x}\right) \left(\dfrac{\de \vect\Gamma_{\Omega} (\kappa, \vect x)}{\de \vect x}\right)^\top \right]^{-1} 
	\end{equation*}
	\qed
\end{proof}

According to Theorem~\ref{theorem_3}, the following criteria are provided. Firstly, $m = n$. Secondly, the selection of $\vect h(\kappa,\vect x)$ should avoid zero elements for any possible values of $\vect x$. Non-zero functions such as exponential functions are preferred to construct each element of $\vect h(\kappa,\vect x)$. Thirdly, the selection of $\vect h(\kappa,\vect x)$ should consider the concrete form of TFC homotopy function. In Eqs.\ \eqref{eq:ToCHM_case_1}--\eqref{eq:ToCHM_case_3}, $\vect h(\kappa,\vect x)$ should be nonlinear in $\kappa$ to ensure the explicit dependence of $\vect\Gamma_{\Omega}(\kappa,\vect x)$ on $\kappa$.

\subsection{A Two-Layer TFC-based DCM Method}

Following the definition of the TFC-based homotopy function in Eq.\ \eqref{eq:constrained_func_f_final}, a two-layer DCM method is proposed.
\begin{figure}
	\centering
	\scalebox{1}{\includegraphics{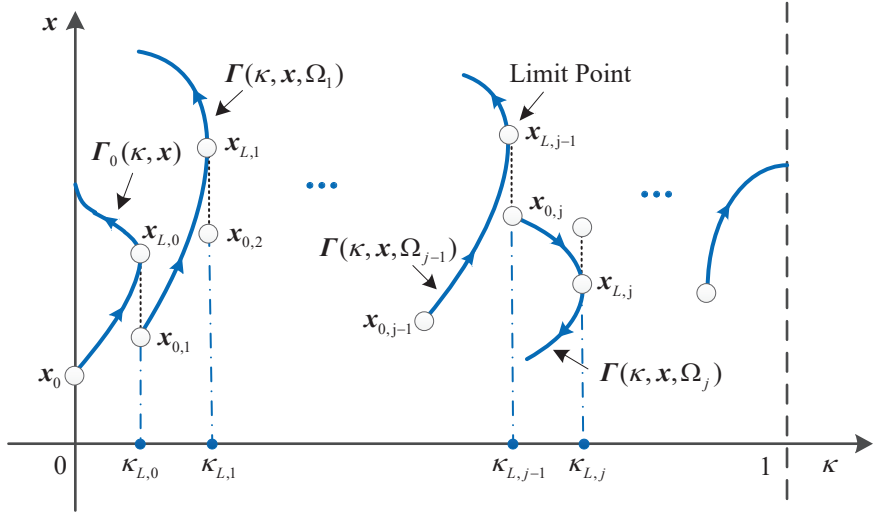}}
	\caption{Graphical layout of the singular point management.}
	\label{fig:TFCHomotopyMethodShow}
\end{figure}

\subsubsection{Singular Point Management}

Fig.~\ref{fig:TFCHomotopyMethodShow} illustrates the method, with a focus on limit point management. Starting from $\vect x_0$ at $\kappa = 0$, the DCM is used first to track the initial homotopy path, defined by $\vect \Gamma_0(\kappa,\vect x)$. When a limit point $\vect x_{L,0}$ is encountered at $\kappa_{L,0}$, another feasible homotopy path defined by $\vect \Gamma(\kappa, \vect x, \Omega_1)$ is found by searching for a proper $\Omega_1$. Then, the new starting point $\vect x_{0,1}$ at $\kappa_{L,0}$ triggers a new homotopy path, again tracked by DCM. At $\vect x_{L,1}$, the new homotopy path defined by $\vect \Gamma(\vect \kappa, \vect x, \Omega_2)$ is found and tracked. This process is repeated until the line $\kappa = 1$ is reached.

In general, suppose that the DCM encounters a limit point $\vect x_{L,j-1}$ at $\kappa_{L,j-1}$ while tracking the homotopy path defined by $\vect \Gamma(\kappa, \vect x, \Omega_{j-1})$. The goal is to switch to a new solution curve by finding a new homotopy path defined by $\vect \Gamma(\kappa,\vect x,\Omega_j)$ starting from $\vect x_{0,j}$ at $\kappa_{L,j-1}$. The unknown variables for the $j$-th homotopy path are $\Omega_j$ and $\vect x_{0,j}$; that is, a total of $(m+1)\times n$ unknowns against the $n$-dimensional consistency condition. The problem is clearly underdetermined, and therefore $\Omega_j$ and $\vect x_{0,j}$ are found by solving an optimization problem.

The main feature sought in a candidate homotopy path are feasibility and an easy progression of the DCM. Ideally, one may want to switch to a new feasible horizontal path, which would easily lead to the solution of the objective problem ($\kappa=1$). In this respect, the projected $\|\vect\Gamma\|_2$ error trend along a candidate homotopy path is considered. In Fig.~\ref{fig:PerformanceIndexp}, the projected error is discerned into a near-side error, $\vect \Gamma( \kappa_{L,j-1} + \Delta \kappa,\vect x,\Omega_j)$, and a far-side error $\vect \Gamma(\min(\kappa_{L,j-1} + i \zeta \Delta \kappa,1),\vect x,\Omega_j)$. The former is minimized to ease restart of the DCM, while the latter is weighted to select a mild path. The problem is therefore to
\begin{equation} \label{eq:opt_problem}
	\min_{\Omega_j,\vect x_{0,j}} J \quad \textrm{s.t.} \quad \vect c_{\rm eq} = \vect 0
\end{equation}
where
\begin{equation} \label{eq:objfcn}
J \coloneqq \|\vect\Gamma\left(\min(\kappa_{L,j-1} + \Delta \kappa, 1),\vect x, \Omega_j \right)\|_2 + \sum_{i=1}^{N}\gamma^i \|\vect\Gamma\left( \min(\kappa_{L,j-1} + i \zeta \Delta \kappa,1),\vect x,\Omega_j
    \right)\|_2
\end{equation}
and
\begin{equation} \label{eq:nlcon}
	\vect c_{\rm eq} \coloneqq
	\begin{cases} 
	\vect 1_{n \times 1}, & \mbox{if} ~ \left|\det\left({\de \vect \Gamma\left(\kappa_{L,j-1},\vect x_{0,j},\Omega_j \right)}/{\de \vect x}\right)\right| \leq \delta \\
	\vect\Gamma \left(\kappa_{L,j-1},\vect x_{0,j},\Omega_j \right), & \mbox{otherwise}
	\end{cases}
\end{equation}
In Eq.\ \eqref{eq:objfcn}, $\gamma \in [0,1)$ is a discount factor, $\zeta$ is the predicted horizon, and $N$ is the number of predicted points. An artificial violation of the equality constraint in Eq.\ \eqref{eq:nlcon} is introduced to avoid near-singular paths. Moreover, $\Omega_{j-1}$ and $\vect x_{L,j-1}$ are taken as initial guess for the optimization problem in Eq.\ \eqref{eq:opt_problem}.

\begin{figure}
	\centering
	\scalebox{1}{\includegraphics{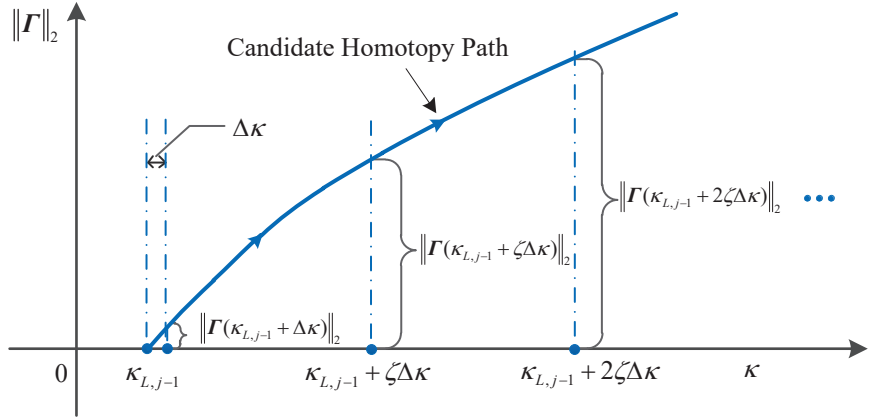}}
	\caption{Error trend along  a candidate homotopy path.}
	\label{fig:PerformanceIndexp}
\end{figure}

\subsubsection{Indefinite Growth Management}

Beside tackling limit points, paths of Type 5 in Fig.\ \ref{fig:homotopy_path_type} are also considered. As shown in Fig.\ \ref{fig:ToC_homotopy_comp_infinite_case}, indefinite growth is managed through thresholding. An a-priori threshold $T_h$ on $\| \vect x \|_{\infty}$ is set. Once the homotopy path crosses the threshold line, the second layer is triggered to switch to an alternative, feasible homotopy path.

In Fig.\ \ref{fig:ToC_homotopy_comp_infinite_case}, when the initial homotopy path exceeds $T_h$, the solution point $\vect x_{{\rm I},0}$ at $\kappa_{{\rm I},0}$ is detected. This is used as initial guess to solve the optimization problem in Eq.\ \eqref{eq:opt_problem}, and a new homotopy path (using $\Omega_1$ and starting from $\vect x_{0,1}$) is tracked. If this new homotopy path exceeds $T_h$ (Failed Case 1) or the solver fails to converge (Failed Case 2), the solution point near but below $T_h/2$ is considered, until a new feasible path is found. Failed Case 2 may happen because the homotopy path tends to infinity and thus ${\de \vect \Gamma\left(\kappa,\vect x,\Omega \right)}/{\de \vect x}$ tends to be singular. In Fig.\ \ref{fig:ToC_homotopy_comp_infinite_case}, the new homotopy path defined by $\vect \Gamma(\kappa,\vect x,\Omega_3)$ starting from $\vect x_{0,3}$ at $\kappa_{{\rm I},2}$ is found by using $T_h/4$. 

The algorithmic rationale of the two-layer, TFC-based homotopy method is summarized in Algorithm \ref{algo:TFC_Homotopy_Algorithm}.

\begin{figure}
    \centering
    \scalebox{0.9}{\includegraphics{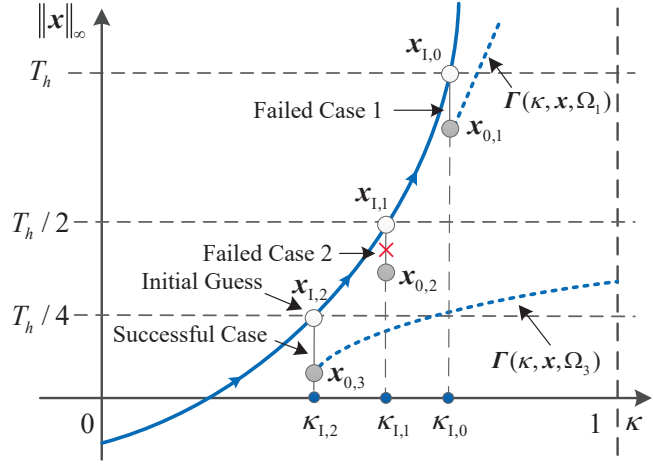}}
    \caption{Graphical layout of the indefinite growth management.}
    \label{fig:ToC_homotopy_comp_infinite_case}
\end{figure}



%

\begin{algorithm}[htp]
	\caption{Two-layer TFC-based DCM Algorithm}
	\label{algo:TFC_Homotopy_Algorithm}
	\begin{algorithmic}[1]
		\REQUIRE{$\Delta \kappa_d$ (default $\kappa$ step), $\vect h(\kappa,\vect x)$, $\vect G(\vect x)$, and $T_h$. }
		\ENSURE{Solution to $\vect F(\vect x) = \vect 0$.}
		\STATE Set $\kappa=0$, $\kappa_{\rm old} = 0$, $j=0$, $\Delta \kappa = \Delta \kappa_d $, and $\Omega_0 = 0_{n \times n}$.
		\STATE Solve the auxiliary problem $\vect G(\vect x) = \vect 0$. 
		\WHILE{$\kappa < 1$}
				\STATE $\kappa := \kappa + \Delta \kappa$.
				\STATE Solve the zero-finding problem $\vect \Gamma(\kappa,\vect x,\Omega_{j}) = \vect 0$.
				\IF{Converged but crossed $T_h$ line.}
					\STATE Solve the optimization problem Eq.~\eqref{eq:opt_problem}.
					\STATE Switch to the new homotopy path $\vect \Gamma(\kappa,\vect x, \Omega_{j+1})$,	$j:=j+1$.
					\STATE $\Delta \kappa := \min(1-\kappa,\Delta \kappa_d)$, $\kappa_{\rm old} := \kappa$.
				\ELSE
				
				\IF{Converged}
					\STATE $\Delta \kappa := \min(1-\kappa,\Delta \kappa_d)$. $\kappa_{\rm old} := \kappa$.
				\ELSE
					\IF{The zero-finding problem fails for less than 3 times}
						\STATE $\Delta \kappa := \Delta \kappa/2$.  $\kappa := \kappa_{\rm old}$. 
					\ELSE
						\STATE Solve the optimization problem Eq.~\eqref{eq:opt_problem}.
						\STATE Switch to the new homotopy path $\vect \Gamma(\kappa,\vect x,\Omega_{j+1})$, $j:=j+1$.
						\STATE $\Delta \kappa := \min(1-\kappa,\Delta \kappa_d)$, $\kappa_{\rm old} := \kappa$.
					\ENDIF
				\ENDIF
				\ENDIF
		\ENDWHILE
		
	\end{algorithmic}
\end{algorithm}

%
\section{Numerical Demonstration}

In this section, three numerical experiments with increasing difficulty are performed using the TFC-based DCM method. To ease assessment of the developed algorithm, the outcome of each problem is compared to the solution obtained using PAM. The homotopy function in Eq.\ \eqref{eq:ToCHM_case_1} is used in all problems. The zero-finding and optimization problems are solved using Matlab's \verb|fzero| and \verb|fmincon| implementing interior-point method, respectively. In both algorithms, the function residual (\verb|TolFun|) and solution tolerance (\verb|TolX|) are both set to $10^{-12}$. All test cases have been performed using Matlab R2019a with Intel Core i7-9750H CPU @2.60 GHz, Windows 10 operating system. The parameters of the optimization problem in Eqs.\ \eqref{eq:objfcn}--\eqref{eq:nlcon} are $\gamma = 0.5$, $\zeta = 15$, $N = 2$, and $\delta = 1 \times 10^{-4}$. A limit point is supposed to be encountered when the zero-finding problem fails for $3$ consecutive times, and half of the $\Delta \kappa$ step is taken.

\subsection{Algebraic Zero-Finding Problem} \label{sec:example1}

The zero of the following two-dimensional function is sought \cite{5391457}
\begin{equation*}
\label{eq:algebrac_eq}
    \vect F(x_1,x_2) = \left\{ {\begin{array}{*{20}{l}}
    a(x_1 + x_2)\\
    a(x_1 + x_2) + (x_1- x_2)((x_1 - b)^2 + x_2^2 - c)
\end{array}} \right.
\end{equation*}
where $a = 4,b = 2,c = 1$. The initial auxiliary function is set as
\begin{equation*}
    \vect G(x_1,x_2) = \left\{ {\begin{array}{*{20}{c}}
    x_1 - 2.5\\
    x_2 - 0.5
\end{array}} \right.
\end{equation*}
while the state-dependent basis function $\vect h(\kappa,\vect x)$ is
\begin{equation*}
    \vect h(\kappa,\vect x) = 
    \begin{bmatrix}
    \mathrm{e}^{x_1}\kappa^2 \\
    \mathrm{e}^{x_2}\kappa^2\\
    \end{bmatrix}
\end{equation*}
and $\Delta \kappa = 0.01 $. 

In \cite{5391457}, it is stated that if the initial condition is located inside the circle $(x_1-2)^2 + x_2^2 = 1$, like in the present case, the fixed-point homotopy function implementing PAM will fail to find the solution. This property is independently confirmed by our numerical experiment. With reference to Fig.\ \ref{fig:eg1_homo}, dashed blue line, the PAM effectively passes a singular point, after which $x_2$ goes off to infinity ($x_1$ returns to the initial point).

When the TFC-based DCM method is used (Fig.\ \ref{fig:eg1_homo}, solid red line), the limit point $\vect x_{L,0} = [1.3879, -0.9221]^\top$ is detected at $\kappa_{L,0} = 0.3738$. Here, the second-layer of the algorithm is triggered, and a new homotopy path is followed, starting from $\vect x_{0,1} = [-0.0726, -0.4492]^\top$ with 
\begin{equation*}
	\Omega_1 = 
	\begin{bmatrix}
	-9.6193 & -1.9914\\
	-3.7169 & -0.4904
	\end{bmatrix}
\end{equation*}
The new homotopy path leads smoothly to $\kappa=1$ where $\vect x^*=[0, 0]^\top$.
In this example, the TFC-based DCM method is able to detect a singular point and to successfully switch to another feasible homotopy path, which eventually converges to the solution of the objective problem.

\begin{figure}
    \centering
    \scalebox{0.6}{\includegraphics{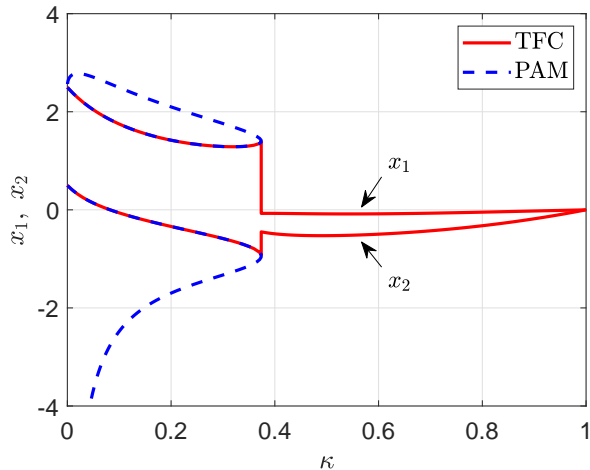}}
    \caption{Homotopy paths generated by the fixed-point method using PAM (dashed blue line) and the TFC-based DCM (solid red line) while attempting to find the zero of the function in Eq.\ \eqref{eq:algebrac_eq}.}
    \label{fig:eg1_homo}
\end{figure}

\subsection{Nonlinear Optimal Control Problem} \label{sec:example2}

Solving a nonlinear optimal control problem means find the zero of a shooting function, which solves the associated two-point boundary value problem~\cite{BrysonHo-807}. Consider the dynamical system
\begin{equation} \label{eq:ex2_dyn}
    \begin{array}{l}
    \dot x_1 = x_1 + x_2 + u_1\\
    \dot x_2 = \tan x_1^2 + u_2
    \end{array}
\end{equation}
along with the performance index
\begin{equation*}
    J = \dfrac{1}{2}\int_{0}^{t_f} \left(u_1^2+u_2^2\right) \di t
\end{equation*}
where the terminal time is $t_f = 1$, and the boundary conditions are set to $\vect x_0 = [-1,-1]^\top$ and $\vect x_f = [0,0]^\top$. An homotopy from linear to nonlinear dynamics is constructed by embedding $\kappa$ into Eq.\ \eqref{eq:ex2_dyn}, i.e.,
\begin{equation*}
    \begin{array}{l}
    \dot x_1 = x_1 + x_2 + u_1\\
    \dot x_2 = \kappa \tan x_1^2 + u_2
    \end{array}
\end{equation*}
Based on the optimal control theory\ \cite{BrysonHo-807}, the Euler--Lagrange equations are
\begin{equation}
\label{eq:NOC_1_dynamics}
    \begin{array}{l}
    \dot x_1 = x_1 + x_2 - \lambda_1\\
    \dot x_2 = \kappa \tan x_1^2 - \lambda_2\\
    \dot \lambda_1 = -\lambda_1 - 2\kappa x_1 \lambda_2/\cos^2 x_1^2 \\
    \dot \lambda_2 = -\lambda _1
    \end{array}
\end{equation}

For a given $\kappa$, the flow $\vect x(t,\vect x_0, \vect \lambda_0)$ can be obtained by integrating Eq.\ \eqref{eq:NOC_1_dynamics} with initial conditions $\vect x_0$ and $\vect \lambda_0$, where $\vect \lambda_0 = [\lambda_1(t_0),\lambda_2(t_0)]^\top$ is the initial costate vector. The zero-finding problem is to find $\vect \lambda_0$ such that $\vect F(\vect \lambda_0) = \vect 0$, where
\begin{equation*}
	\vect F(\vect \lambda_0) = \vect x(t_f,\vect x_0, \vect \lambda_0) - \vect x_f
\end{equation*}

When $\kappa = 0$, the system is linear, and the corresponding initial costate is $\vect \lambda_0 = [-2.9411, -2.0820]^\top$. In this example, the state-dependent function $\vect h(\kappa,\vect x)$ is selected as
\begin{equation*}
    \vect h(\kappa,\vect x) = 
    \begin{bmatrix}
    \mathrm{e}^{\lambda_1}\kappa^2 \\
    \mathrm{e}^{\lambda_2}\kappa^2\\
    \end{bmatrix}
\end{equation*}
and $\Delta \kappa = 0.005$. 

The simulation results are shown in Fig.~\ref{fig:NOC_example_1}, where the comparison of the homotopy paths for PAM (blue dashed line) and TFC-based DCM (red solid line) is shown in Fig.\ \ref{fig:exp2_homo_a}, whereas the optimal trajectory is shown in Fig.\ \ref{fig:eg2_tra_a}. Notice that in Fig.\ \ref{fig:exp2_homo_a} the solution curve tracked by PAM successfully passes a limit point but returns back to $\kappa \simeq 0$. PAM fails to reach the solution to the objective problem at $\kappa =1$.


When the TFC-based DCM method is used, the limit point $\vect \lambda_{L,0} = [-1.2252,-1.5880]^\top$ is detected at $\kappa_{L,0} = 0.5375$. The second layer switches to a new homotopy path starting from $\vect \lambda_{0,1} =[-1.0894, -0.7100]^\top$ with
\begin{equation*}
	\Omega_1 =
	\begin{bmatrix}
	-5.3572  & -4.5911\\
	-3.8820  &  2.3293 \\
	\end{bmatrix}
\end{equation*}
The new homotopy path leads smoothly to the solution of the objective problem, where $\vect \lambda^*(t_0)=[0.4728,-0.0739]^\top$.

\begin{figure}
     \centering
     \subfloat[]{\includegraphics[width=0.5\textwidth]{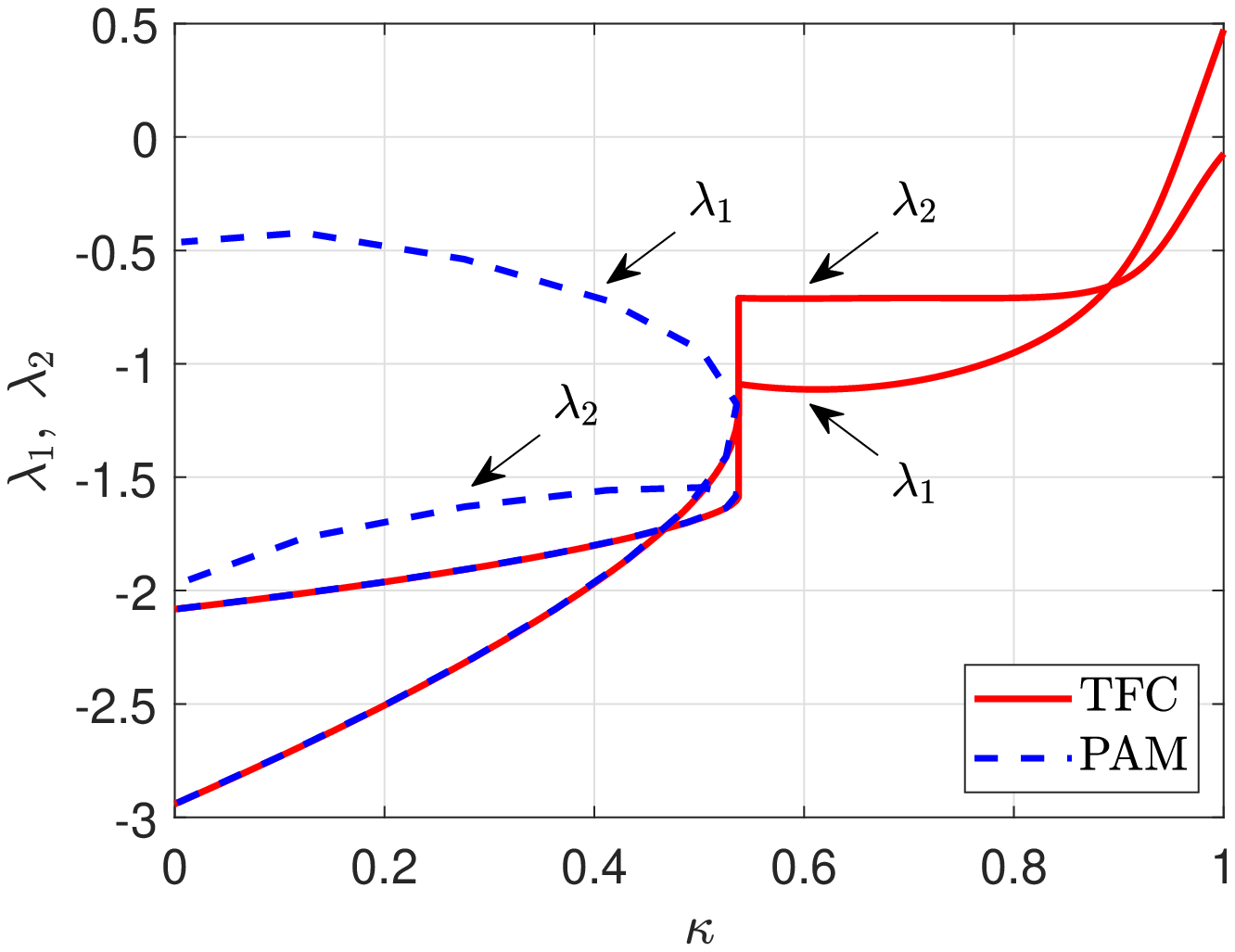}\label{fig:exp2_homo_a}}
     \hfill
     \subfloat[]{\includegraphics[width=0.5\textwidth]{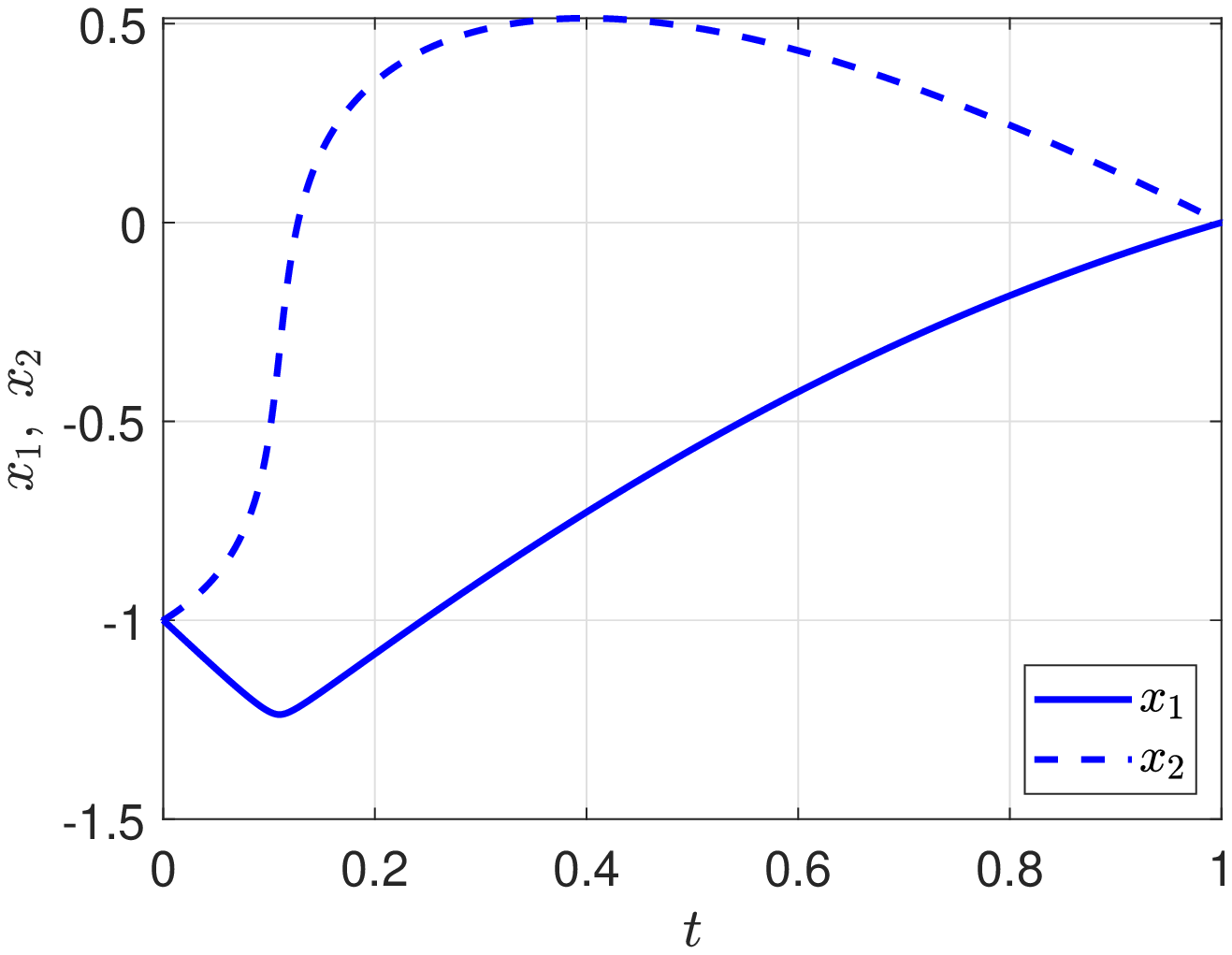}\label{fig:eg2_tra_a}}

    \caption[Simulation results]{Simulation results for the nonlinear optimal control problem. (a): Comparison of homotopy paths tracked by PAM and TFC-based DCM method; (b): Optimal trajectories $x_1(t)$ and $x_2(t)$.}
     \label{fig:NOC_example_1}
\end{figure}

\subsection{Elastic Rod Problem}

While in Sections \ref{sec:example1} and \ref{sec:example2} the issue was overcoming a singular point (Type 1, 3, and 4 in Fig.~\ref{fig:homotopy_path_type}), in this example the path goes off to infinity without encountering any limit point (Type 5 in Fig.~\ref{fig:homotopy_path_type}). The cantilever beam problem, which is to find the position $(a,b)$ of the tip of the rod given the force $Q \neq 0$ and $P =0$, has a closed-form solution in terms of elliptic integrals. The inverse problem, where the tip's position $(a,b)$ and orientation $c$ are specified, while the forces $(Q,P)$ and torque $(M)$ are to be determined, has no similar closed-form solution. It is a nonlinear problem that is difficult to solve \cite{watson1989globally}. The inverse problem is solved in this section. The dynamic equations
\begin{equation}
    \label{eqs:prob3_dyn}
    \begin{array}{l}
    \dot x =  \cos \theta\\
    \dot y =  \sin \theta\\
    \dot \theta = Q x - P y + M
    \end{array}
\end{equation}
are supported by the boundary conditions
\begin{equation*}
    x(0) = y(0) = \theta(0) = 0, \quad
    x(1) = a,\quad y(1) = b, \quad \theta(1) = c
\end{equation*}

The unknown variables are denoted as $\vect v = [Q,P,M]^\top$, and the corresponding flow is denoted as $x(t,\vect v),y(t,\vect v),\theta(t,\vect v)$. The problem is to find $\vect v^*$ such that
\begin{equation}
    \label{eq:prob3_equal_eqs}
    \vect F(\vect v^*) = 
    \begin{bmatrix}
        x(t_f,\vect v^*) - a \\
        y(t_f,\vect v^*) - b\\
        \theta(t_f,\vect v^*) - c
    \end{bmatrix}
    = \vect 0
\end{equation}

A fixed-point homotopy function is defined as
\begin{equation*}
    \label{eq:prob3_homo}
    \vect\Gamma_0(\kappa, \vect v) = (1-\kappa) \vect F(\vect v) + \kappa \vect G(\vect v) \quad \textrm{with} \quad
    \vect G(\vect v) = (\vect v - \vect v_0)
\end{equation*}
where $\vect v_0$ is the initial guess solution. The parameters are set to $a = 0$, $b = 2\pi$, $c = \pi$, and $\vect v_0= [0,0,1.85]^\top$. In this case, the solution to the objective problem in Eq.\ \eqref{eq:prob3_equal_eqs} is known to be $\vect v^* = [0,0,\pi]^\top$\ \cite{watson1981homotopy}. The Jacobian matrix of Eq.\ \eqref{eq:prob3_equal_eqs} w.r.t $\vect v$ has been computed using finite differences, and the limit threshold $T_h$ is set to $100$. The selected state-dependent basis function $\vect h(\kappa,\vect v) $ is
\begin{equation*}
    \vect h(\kappa,\vect v)  = 
    \begin{bmatrix}
    \mathrm{e}^{Q}\kappa^2\\
    \mathrm{e}^{P}\kappa^2\\
    \mathrm{e}^{M}\kappa^2
    \end{bmatrix}
\end{equation*}
and $\Delta \kappa = 0.001$.

The simulation results are shown in Fig.\ \ref{fig:NOC_example_3}, where the homotopy paths generated by PAM (blue lines) and TFC-based DCM (red lines) are shown (Fig.\ \ref{fig:pro3_homotopy_path_2} shows an enlarged view of Fig.\ \ref{fig:pro3_homotopy_path} when $\kappa\to1$). PAM is not able to reach $\vect v^*$ because the homotopy path grows indefinitely when $\kappa\to1$.

Using TFC-based DCM, the failure of the initial homotopy path is detected when $\|\vect v\|_{\infty}$ exceeds $T_h$. The point $\vect v_{{\rm I},0} = [-99.2011, -50.7766, 11.0163]^\top$ at $\kappa_{{\rm I},0} = 0.9965$ is used as initial guess for problem \eqref{eq:opt_problem}. A new start point $\vect v_{0,1} = [ -99.1967,-50.7777,11.0159]^\top$ is found, with
\begin{equation*}
\Omega_1 =
\begin{bmatrix}
0 &  0 &   -2.57 \times 10^{-5}\\
0 &  0 &   -1.59 \times 10^{-5}\\
0 &  0 &   3.2 \times 10^{-4}
\end{bmatrix}
\end{equation*}
which is very close to the initial path. Since this homotopy path excesses $T_h$ again, a second switch is attempted using $T_h/2$. The initial guess $\vect v_{{\rm I},1} =[-49.6995,-24.9227,7.8530]^\top$ at $\kappa_{{\rm I},1} = 0.9940$ is detected, and problem \eqref{eq:opt_problem} is solved gain. The new homotopy path with starting point $\vect v_{0,2} = [-51.2892,-6.0631,9.5708]^\top$ and
\begin{equation*}
\Omega_2 =
\begin{bmatrix}
0  &  0.0019  &   -0.0028\\
0 &   0.1211  &   0.0019\\
0  &  0.0280   &  0.0008 
\end{bmatrix}
\end{equation*}
is found. From this point on, the TFC-based DCM successfully reaches $\vect v^*$.



\begin{figure}
     \centering
     \subfloat[]{\includegraphics[width=0.5\textwidth]{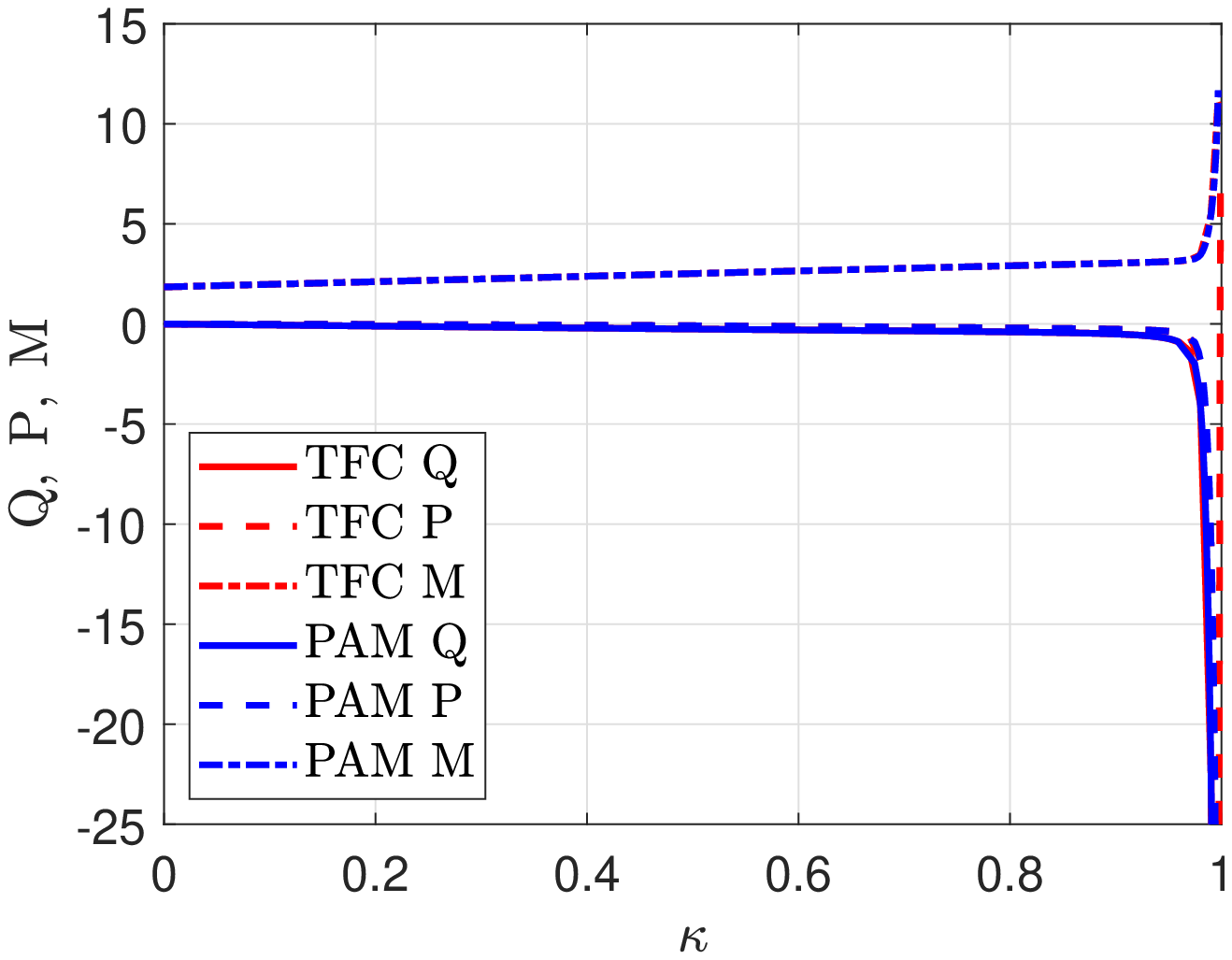}\label{fig:pro3_homotopy_path}}
     \hfill
     \subfloat[]{\includegraphics[width=0.5\textwidth]{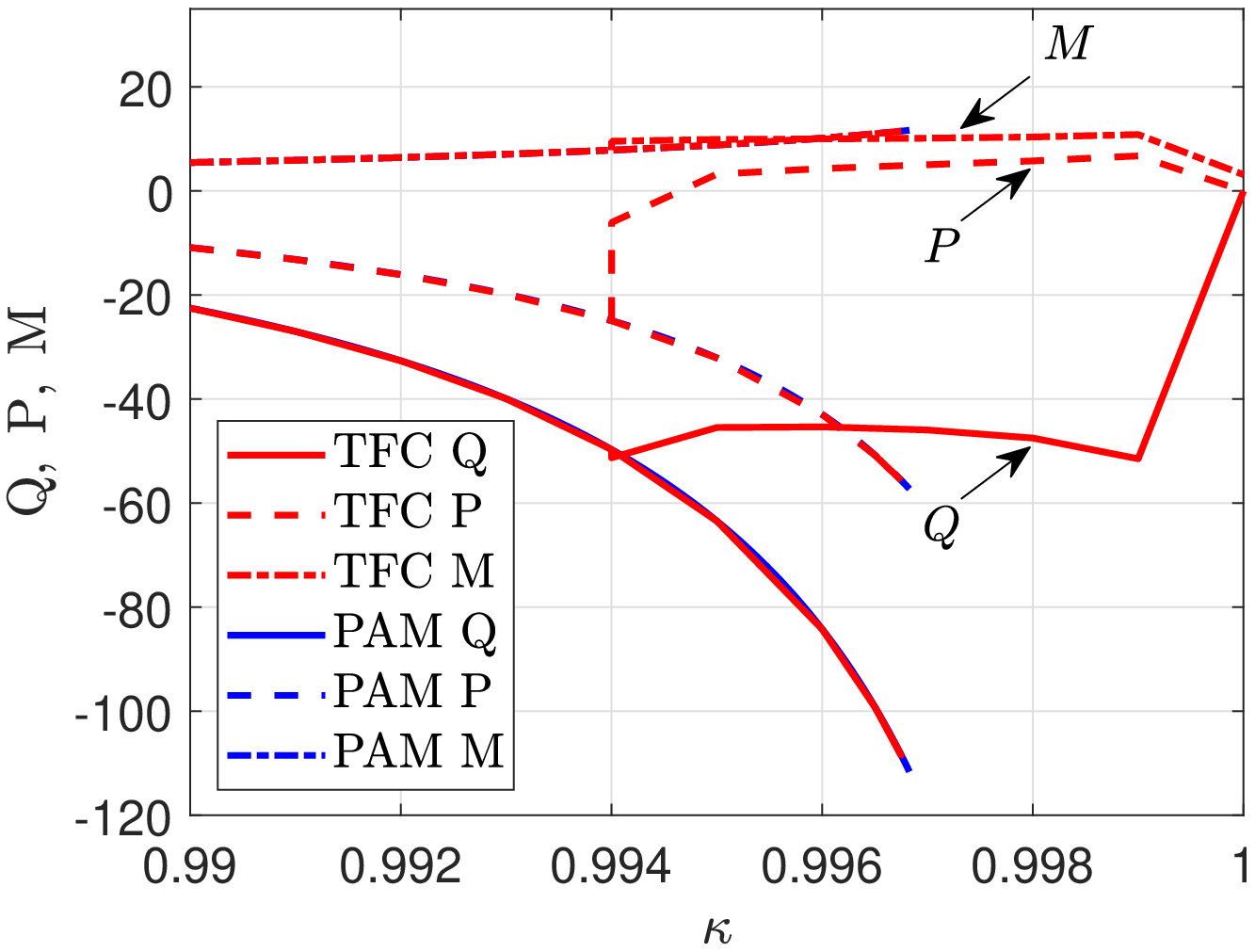}\label{fig:pro3_homotopy_path_2}}
    \caption[Simulation results]{Simulation results for elastic red problem. (a): Comparison of homotopy paths tracked by PAM (blue lines) and TFC-based DCM (red lines); (b): Zoom-in comparison of homotopy paths when $\kappa\to1$. }
     \label{fig:NOC_example_3}
\end{figure}

%
\section{Conclusion}

Homotopy is a deformation used in zero-finding problems. The idea is to connect an initial easy-to-solve problem to the final, objective problem through the solution of a number of intermediate, auxiliary problems that define the homotopy path. Traditional techniques based on pure DCM or PAM fail to reach the objective problem, e.g., when the homotopy path exhibits singular points or indefinite growth. The fate of these methods is already determined when the homotopy function is formulated and the initial condition is given.

The TFC-based homotopy function presented in this paper implicitly defines infinite homotopy paths. This property can be leveraged whenever either a singularity is found or the path tends to go off to infinity. In these cases, the algorithm is able to switch to a new homotopy path, which attempts to reach the objective problem. A two-layer TFC-based DCM algorithm has been developed to support our intuition. The effectiveness of this algorithm has been proved by solving sample problems where the traditional continuation methods fail.

The presented TFC-based homotopy function is a valuable step towards designing probability-one homotopy methods for general applications. Future work will investigate more robust strategies to find feasible homotopy paths, such as the TFC-based homotopy method from control point of view.

%
\section{Acknowledgment}
Y.W.\ acknowledges the support of the China Scholarship Council (Grant no.201706290024). The authors would like to thank Prof.\ Daniele Mortari for the fruitful discussions on the Theory of Functional Connections.
%
\section*{Appendix}
Let $\Omega_{\rm col} = \rm vec (\Omega) \in \mathbb{R}^{mn \times 1}$, where `$\rm vec$' is an operator that converts matrices into column vectors. Then,  $\Omega\vect\Gamma_{\Omega}(\kappa,\vect x)= \Tilde{\vect\Gamma}_{\Omega}(\kappa,\vect x)\Omega_{\rm col}$, where
\begin{equation*}
\Tilde{\vect\Gamma}_{\Omega}(\kappa,\vect x) \coloneqq 
\begin{bmatrix}
\Tilde{\vect h}^\top(\kappa,\vect x) &  &  & \\
& \Tilde{\vect h}^\top(\kappa,\vect x) & & \\
&  & \ddots &  \\
&  &  & \Tilde{\vect h}^\top(\kappa,\vect x)
\end{bmatrix}
\in \mathbb{R}^{n \times mn}
\end{equation*}
and 
\begin{equation*}
\Tilde{\vect h}(\kappa,\vect x) \coloneqq \left(\vect h(\kappa,\vect x) - \sum_{i=1}^{2} P_i(\kappa) Q_{i1} \vect h_{0}(\vect x) - \sum_{i=1}^{2} P_i(\kappa) Q_{i2} \vect h_{f}(\vect{x})\right)
\end{equation*}
Thus, $\Omega$ can be seen as a column vector $\vect \epsilon \in \mathbb{R}^q$ where $q = mn$. 





\section*{References}
\bibliographystyle{elsarticle-num-names}
\bibliography{MyBib.bib}







\end{document}